\documentclass[a4paper]{article}
\usepackage{amsthm,amsmath,amssymb}
\usepackage{enumerate,enumitem}
\usepackage{graphicx}
\usepackage{xfrac}
\usepackage{ulem}
\usepackage{mathtools}
\usepackage{ wasysym }
\usepackage{float}
\textwidth=14cm
\usepackage{tabularx,booktabs,multirow}
\usepackage{hyperref}
\newcolumntype{C}{>{\centering\arraybackslash}X}
\newcolumntype{D}{>{\centering\arraybackslash}X}

\DeclareGraphicsExtensions{.pdf,.png,.eps}
\graphicspath{{figs/}}

\newtheorem{theorem}{Theorem}
\newtheorem{lemma}[theorem]{Lemma}

\newtheorem{proposition}[theorem]{Proposition}
\newtheorem{corollary}[theorem]{Corollary}

\newtheorem{conjecture}[theorem]{Conjecture}

\newtheorem*{claim*}{Claim}

\theoremstyle{remark}

\newcommand{\Vol}{{\rm Vol}}

\newcommand{\cA}{\ensuremath{\mathcal{A}}}

\newcommand{\cB}{\ensuremath{\mathcal{B}}}
\newcommand{\cD}{\ensuremath{\mathcal{D}}}

\newcommand{\cP}{\ensuremath{\mathcal{P}}}

\newcommand{\cS}{\ensuremath{\mathcal{S}}}

\newcommand{\cT}{\ensuremath{\mathcal{T}}}
\newcommand{\N}{\ensuremath{\mathbb{N}}}

\newcommand{\R}{\ensuremath{\mathbb{R}}}

\newcommand{\qn}{\ensuremath{\lfloor qn \rfloor}}
\newcommand{\QN}{\ensuremath{\lceil qn \rceil}}

\newcommand{\bU}{\ensuremath{\mathbf{U}}}
\newcommand{\bX}{\ensuremath{\mathbf{X}}}
\newcommand{\bY}{\ensuremath{\mathbf{Y}}}
\newcommand{\bZ}{\ensuremath{\mathbf{Z}}}

\newcommand{\QQ}{\ensuremath{\mathcal{Q}}}

\newcommand{\Rw}{\ensuremath{R^{\text{\normalfont w}}}}

\usepackage[usenames,dvipsnames]{xcolor}

\newcommand{\footremember}[2]{%
  \footnote{#2}
  \newcounter{#1}
  \setcounter{#1}{\value{footnote}}%
}
\newcommand{\footrecall}[1]{%
  \footnotemark[\value{#1}]%
}

\begin{document}

\title{Diagonal poset Ramsey numbers}
\author{Maria Axenovich \footremember{grant}{Karlsruhe Institute of Technology, Karlsruhe, Germany. The research was partially supported by DFG grant FKZ AX 93/2-1.}
\and Christian Winter \footrecall{grant}\ \footnote{E-mail: \textit{christian.winter@kit.edu}} }
\maketitle

\begin{abstract}
A poset $(Q,\le_Q)$ contains an induced copy of a poset $(P,\le_P)$ if there exists an injective mapping $\phi\colon P\to Q$ such that for any two elements $X,Y\in P$, $X\le_P Y$ if and only if $\phi(X)\le_Q \phi(Y)$. By $Q_n$ we denote the Boolean lattice $(2^{[n]},\subseteq)$. The poset Ramsey number $R(P,Q)$ for posets $P$ and $Q$ is the least integer $N$ for which any coloring of the elements of $Q_N$ in blue and red contains either a blue induced copy of $P$ or a red induced copy of $Q$.

In this paper, we show that $R(Q_m,Q_n)\le nm-\big(1-o(1)\big)n\log m$ where $n\ge m$ and $m$ is sufficiently large. This improves the best known upper bound on $R(Q_n,Q_n)$ from $n^2-n+2$ to $n^2-\big(1-o(1)\big) n\log n$. Furthermore, we determine $R(P,P)$ where $P$ is an $n$-fork or $n$-diamond up to an additive constant of $2$. 

A poset $(Q,\le_Q)$ contains a weak copy of $(P,\le_P)$ if there is an injection $\psi\colon P\to Q$ such that $\psi(X)\le_Q \psi(Y)$ for any $X,Y\in P$ with $X\le_P Y$.
The weak poset Ramsey number $\Rw(P,Q)$ is the smallest $N$ for which any blue/red-coloring of $Q_N$ contains a blue weak copy of $P$ or a red weak copy of $Q$.
We show that $\Rw(Q_n,Q_n)\le 0.96n^2$.
\end{abstract}

\section{Introduction}

A \textit{poset} is a set $P$ equipped with a partial order $\le_P$, i.e., a binary, transitive, reflexive, and antisymmetric relation. 
Usually, we refer to such a poset $(P,\le_P)$ just as $P$ and call the elements of $P$ \textit{vertices}.
Let $P$ and $Q$ be two posets. A \textit{(strong) embedding} $\phi: P\to Q$ of $P$ into $Q$ is an injective function where for any two $X,Y\in P$, $X\le_P Y$ if and only if $\phi(X)\le_Q \phi(Y)$. We say that the image of $\phi$ is an \textit{(induced) copy} of $P$ in $Q$.
Given a non-empty set $Z$, we denote by $\QQ(Z)=(2^Z,\subseteq)$ the \textit{Boolean lattice} on ground set $Z$ with \textit{dimension} is $|Z|$.
We use $Q_n$ to denote a Boolean lattice with an arbitrary $n$-element ground set. For $\ell\in\{0,\dots,|Z|\}$, the \textit{$\ell$-th layer} of $\QQ(Z)$ is the set $\{X\in\QQ(Z):~ |X|=\ell\}$.
\\

Ramsey-type problems in extremal combinatorics are widely studied for various graph structures. 
Initiated by Ne\v{s}et\v{r}il and R\"odl \cite{NR}, such questions were also considered for posets.
A \textit{blue/red coloring} of a poset $Q$ is a mapping $c\colon Q\to\{\text{blue},\text{red}\}$. We say that a poset is \textit{blue} if every of its vertices is colored blue, and \textit{red} if every of its vertices is colored red.
For an integer $k$ and a poset parameter $p$ such as size, height, or width, Kierstead and Trotter \cite{KT} analysed the minimal $p(Q)$ of a poset $Q$ such that any blue/red coloring $c\colon Q\to\{\text{blue},\text{red}\}$ contains a monochromatic induced copy of a poset $P$ with $p(P)=k$.
When $p(Q)$ is the dimension of the smallest Boolean lattice containing a copy of $Q$, this problem leads to the \textit{poset Ramsey number} introduced by Axenovich and Walzer \cite{AW}.
For posets $P$ and $Q$, let 
\begin{multline*}
R(P,Q)=\min\{N\in\N \colon \text{ every blue/red coloring of $Q_N$ contains either}\\ 
\text{a blue induced copy of $P$ or a red induced copy of $Q$}\}.
\end{multline*}
This extremal parameter has been actively studied in recent years with the central question being the poset Ramsey number $R(Q_n,Q_n)$ for large $n$.
In this setting, first bounds were given by Axenovich and Walzer \cite{AW} who showed that $2n\le R(Q_n,Q_n)\le n^2+2n$.
The upper bound was  improved by Walzer \cite{Walzer} to $n^2+1$, and then by Lu and Thompson \cite{LT} to the best known value $n^2-n+2$.
Cox and Stolee \cite{CS} improved the lower bound to $2n+1$ for $n\ge 13$, which was extended to all $n\ge 3$ by Bohman and Peng \cite{BP}.
The best known lower bound for large $n$ is given by Winter \cite{QnEH}, who showed that $R(Q_n,Q_n)\ge 2.24n$.

Note that any posets $P$ and $Q$ are contained as a copy in $Q_n$ for sufficiently large $n$, therefore $R(P,Q)$ is well-defined.
In the off-diagonal setting, Lu and Thompson \cite{LT} showed that for $n\ge m\ge 4$,
$$R(Q_m,Q_n)\le n\Big(m-2+O\big(\tfrac{1}{m}\big)\Big)+m+3.$$
Further bounds are known if one or two of the parameters $m$ and $n$ are small, e.g. $R(Q_2,Q_n)=n+\Theta\big(\tfrac{n}{\log n}\big)$, where the upper bound is given by Grosz, Methuku and Tompkins \cite{GMT} and the lower bound is by the authors \cite{QnV}. 
Furthermore, $R(Q_2,Q_2)=4$, $R(Q_2,Q_3)=5$, and $R(Q_3,Q_3)=7$, see \cite{AW},\cite{LT}, and Falgas-Ravry et al. \cite{FMTZ}, respectively.
\\

The first result of this paper is a strengthened upper bound on the poset Ramsey number of $Q_m$ versus $Q_n$ when both $m$ and $n$ large.
\begin{theorem}\label{thm:QmQn}
Let $n,m\in\N$ with $2^{25}\le m \le n$. Then $$R(Q_m,Q_n)\le n\left(m-\big(1-\tfrac{2}{\sqrt{\log m}}\big)\log m\right).$$
Moreover, if $n\ge m$ and $\varepsilon\in\R$, $0<\varepsilon<1$, such that $\tfrac{n+m}{n}\cdot \tfrac{1}{(1-\varepsilon)\log m}+m^{-\varepsilon}\le \varepsilon$, then
$$R(Q_m,Q_n)\le n\big(m-(1-\varepsilon)^2\log m\big).$$
\end{theorem}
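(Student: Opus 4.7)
\emph{Strategy.} The plan is to prove the stronger second statement; the first bound then follows by taking $\varepsilon = 1/\sqrt{\log m}$ and checking that the hypothesis on $\varepsilon$ is satisfied for $m\ge 2^{25}$. The approach is a block-partition combined with a greedy coordinate-by-coordinate construction of a red induced $Q_n$. The key new ingredient over prior bounds is an averaging/density step whose efficiency is exactly calibrated by the inequality $\tfrac{n+m}{n(1-\varepsilon)\log m}+m^{-\varepsilon}\le\varepsilon$.

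\emph{Setup.} Set $s = m - (1-\varepsilon)^2\log m$ and $N = ns$. Let $c$ be any blue/red coloring of $\QQ([N])$ and assume no blue induced copy of $Q_m$ exists; the goal is to produce a red induced copy of $Q_n$. Partition $[N] = Z_1 \sqcup \cdots \sqcup Z_n$ with $|Z_j| = s$. I will select pairwise disjoint nonempty sets $I_j \subseteq Z_j$ such that for every $S \subseteq [n]$ the set $\bigcup_{j\in S}I_j$ is red; the family $\{\bigcup_{j\in S}I_j : S\subseteq [n]\}$ is then a red induced copy of $Q_n$ with base $\emptyset$ and coordinate increments $I_1,\dots,I_n$.

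\emph{Greedy selection and extension lemma.} The $I_j$ are chosen one block at a time. At step $j$, having fixed $I_1,\ldots,I_{j-1}$, call $I\subseteq Z_j$ \emph{bad} if $I\cup\bigcup_{i\in T}I_i$ is blue for some $T\subseteq [j-1]$. The extension lemma asserts that the number of bad $I\subseteq Z_j$ is strictly less than $2^{|Z_j|}=2^s$, so a valid non-bad $I_j$ always exists; iterating this for $j=1,\ldots,n$ yields the required red $Q_n$.

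\emph{Heart of the argument.} Proving the extension lemma is the main obstacle and is where the savings of $(1-\varepsilon)^2\log m$ per block are bought. Each bad $I$ is charged to the blue vertex $I\cup\bigcup_{i\in T}I_i$ it witnesses, and the number of $I$'s sharing a single witness is controlled by the coloring density of blue sets in an interval of $\QQ([N])$ anchored at that witness. If too many $I$'s share a witness, then after a dyadic thinning---restricting to a subfamily of $I$'s forming a ``nice'' Boolean sublattice in $\QQ(Z_j)$, which costs a factor of $(1-\varepsilon)$ in the exponent and accounts for the square in $(1-\varepsilon)^2\log m$---one extracts a blue induced copy of $Q_m$, using $s$ coordinates supplied by $Z_j$ and $(1-\varepsilon)^2\log m$ further coordinates recycled from the previously chosen $I_1,\ldots,I_{j-1}$; this contradicts the assumption. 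The term $m^{-\varepsilon}$ in the hypothesis corresponds to the per-witness density bound, while the term $\tfrac{n+m}{n(1-\varepsilon)\log m}$ absorbs the union bound over the (at most $2^{j-1}$) witnesses, scaled by the $\log m$ ``coordinate directions'' available for the recycling step. The delicate part is ensuring both tightness (so the full $(1-\varepsilon)^2\log m$ saving is captured) and that the extracted copy is genuinely \emph{induced}, which is precisely what the dyadic-thinning step guarantees.
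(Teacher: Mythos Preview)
Your proposal is not a proof; the central ``extension lemma'' is asserted with a sketch that does not hold together, and the gap is not cosmetic.

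First, the charging as you describe it is vacuous: for a fixed $T\subseteq[j-1]$ the map $I\mapsto I\cup\bigcup_{i\in T}I_i$ is injective (since $I$ is recovered as the intersection with $Z_j$), so distinct bad $I$'s never ``share a witness'' in the sense you wrote. If instead you mean to fix $T$ and count the $I$'s with $I\cup A_T$ blue, then you are counting blue vertices inside a single blob $\cB(A_T;Z_j)$ of dimension $s=m-(1-\varepsilon)^2\log m<m$. Even if that blob is \emph{entirely} blue you only get a blue $Q_s$, not a blue $Q_m$; to reach dimension $m$ you propose to add $(1-\varepsilon)^2\log m$ ``recycled'' coordinates $I_{i_1},\ldots,I_{i_k}$, but that requires the $2^m$ vertices $I\cup\bigcup_{i\in U}I_i$ (over all $I\subseteq Z_j$ and all $U\subseteq\{i_1,\ldots,i_k\}$) to be \emph{simultaneously} blue. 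Your hypothesis gives only that for each bad $I$ \emph{some} $T$ works, and there is no mechanism forcing these $T$'s to align into a single $k$-dimensional cube. Second, your union-bound accounting is off by an exponential: at step $j=n$ there are $2^{n-1}$ candidate $T$'s, and a term of size $\tfrac{n+m}{n(1-\varepsilon)\log m}<1$ cannot ``absorb'' a factor of that order.

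For comparison, the paper does something structurally different. It does not greedily build the red $Q_n$ coordinate-by-coordinate. Instead it sets $t_\mu=(1-\varepsilon)\log m$, $t_\eta=\tfrac{n}{m}t_\mu$ and runs a dichotomy on \emph{truncated} blobs: either some $t_\eta$-truncated $n$-dimensional blob of small volume is entirely red (and then a Truncated Blob Lemma extends it to a full red $Q_n$ or finds a blue $Q_m$), or every such truncated blob contains a blue vertex. In the second case one builds, layer by layer, a blue $\bX$-good embedding of only the first $t_\mu$ layers of $Q_m$ --- that is $\sum_{i\le t_\mu}\binom{m}{i}\approx m^{1-\varepsilon}$ vertices, not $2^m$ --- while keeping the total volume at most $N-(m-t_\mu)n$; the Truncated Blob Lemma then finishes. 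The inequality $\tfrac{n+m}{n(1-\varepsilon)\log m}+m^{-\varepsilon}\le\varepsilon$ enters precisely in the volume bookkeeping (the sets $\bY^{(i)}$ have sizes $2^{i-1}t_\eta$, and one needs $n+(2^{t_\mu}-1)t_\eta\le\varepsilon n t_\mu$), not as a density/union-bound trade-off. The point is that by embedding only $t_\mu\approx\log m$ layers one never faces an exponential union bound; this is exactly the idea your sketch is missing.
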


In this article, $\log$ refers to the logarithm with base $2$.
Theorem \ref{thm:QmQn} is the first result which improves the initial basic upper bound by Axenovich and Walzer \cite{AW}, see Lemma \ref{lem:blob_restated}, by a superlinear additive term.
Our result immediately provides an improved upper bound on $R(Q_n,Q_n)$.

\begin{corollary}\label{cor:QnQnUB}
For $\varepsilon>0$ and sufficiently large $n\in\N$ depending on $\varepsilon$, $R(Q_n,Q_n)\le n^2- (1-\varepsilon)n\log n$.
\end{corollary}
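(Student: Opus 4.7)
The plan is to derive the corollary directly from the ``moreover'' part of Theorem \ref{thm:QmQn} by specializing $m=n$ and choosing the parameter inside the theorem appropriately in terms of the target $\varepsilon$.

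First, I would rename the parameter in Theorem \ref{thm:QmQn} to avoid a clash, say calling it $\varepsilon'$, so the theorem delivers the bound
$$R(Q_m,Q_n)\le n\bigl(m-(1-\varepsilon')^2\log m\bigr)$$
whenever $\tfrac{n+m}{n}\cdot\tfrac{1}{(1-\varepsilon')\log m}+m^{-\varepsilon'}\le \varepsilon'$. Given the target $\varepsilon>0$ of the corollary, the idea is to pick $\varepsilon'>0$ small enough that $(1-\varepsilon')^2\ge 1-\varepsilon$; for instance $\varepsilon'=\varepsilon/2$ works, since $(1-\varepsilon/2)^2=1-\varepsilon+\varepsilon^2/4\ge 1-\varepsilon$.

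Next, I would plug $m=n$ into the hypothesis of the theorem. The required inequality becomes
$$\frac{2}{(1-\varepsilon')\log n}+n^{-\varepsilon'}\le \varepsilon'.$$
Both summands on the left tend to $0$ as $n\to\infty$ (the first because $\log n\to\infty$, the second because $\varepsilon'>0$ is fixed), so the inequality holds for all $n\ge n_0(\varepsilon)$ for some threshold depending only on $\varepsilon$. For such $n$, Theorem \ref{thm:QmQn} yields
$$R(Q_n,Q_n)\le n\bigl(n-(1-\varepsilon')^2\log n\bigr)\le n^2-(1-\varepsilon)\,n\log n,$$
which is exactly the bound claimed.

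There is essentially no obstacle here; the only subtlety is making sure the choice of $\varepsilon'$ both satisfies $(1-\varepsilon')^2\ge 1-\varepsilon$ and allows the technical inequality of the theorem to be absorbed into ``$n$ sufficiently large''. Both are handled by taking $\varepsilon'$ to be a fixed positive constant smaller than $\varepsilon$ (e.g.\ $\varepsilon/2$), so the corollary is a quick consequence once Theorem \ref{thm:QmQn} is in hand.
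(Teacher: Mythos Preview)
Your proposal is correct and matches the paper's approach: the paper states the corollary as an immediate consequence of Theorem~\ref{thm:QmQn} without giving a separate proof, and your argument is precisely the natural way to spell out that derivation (specialize $m=n$, pick the theorem's parameter as a fixed constant smaller than the target $\varepsilon$, and absorb the hypothesis into ``$n$ sufficiently large''). One tiny remark: for $\varepsilon\ge 1$ the claimed bound is weaker than known results, so you may assume $0<\varepsilon<1$, which also guarantees $\varepsilon'=\varepsilon/2\in(0,1)$ as required by the theorem.
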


\bigskip

The diagonal poset Ramsey number $R(P,P)$ is known exactly for some basic posets $P$.
Walzer \cite{Walzer} determined this number for antichains and chains.
Chen, Chen, Cheng, Li, and Liu \cite{CCCLL} found $R(P,P)$ when $P$ is the poset which consists of two elementwise incomparable chains on a given number of vertices, with an added vertex smaller than all other vertices. 

Here, we study the diagonal poset Ramsey number $R(P,P)$ for further posets $P$.
The \textit{$n$-fork} $V_n$ is the poset consisting of an antichain on $n$ vertices with an added vertex smaller than all other vertices.
Similarly, the \textit{$n$-diamond} $D_n$ is the poset consisting of an antichain on $n$ vertices and a vertex smaller than all others as well as a vertex larger than all others.
\\

Let $n\in\N$. We denote by $\alpha(n)$ the minimal dimension $N$ such that $Q_N$ contains an antichain of size $n$. We call $\alpha(n)$ the \textit{Sperner number} of $n$.
Sperner \cite{Sperner} showed that $\alpha(n)$ is the minimal integer such that $\binom{N}{\lfloor N/2\rfloor}\ge n$.
It is a basic observation that $\alpha(n)\le \alpha(2n-1)\le \alpha(n)+2$, which we will use repeatedly.
Note that $\alpha(n)=\big(1+o(1)\big)\log n$, see also an almost exact bound on $\alpha(n)$ by Habib, Nourine, Raynaud and Thierry \cite{HNRT}.

\begin{theorem}\label{thm:DnDn}
For every $n\in\N$, $2\alpha(n)\le R(D_n,D_n)\le  \alpha(n)+\alpha(2n-1)$. 
In particular, $2\alpha(n)\le R(D_n,D_n)\le 2\alpha(n)+2$  and thus $R(D_n,D_n)=\big(2+o(1)\big)\log n$.
\end{theorem}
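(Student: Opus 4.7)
The plan is to prove the two displayed inequalities separately; after that, the ``in particular'' estimate and the asymptotic follow at once from $\alpha(n)\le\alpha(2n-1)\le\alpha(n)+2$ and $\alpha(n)=(1+o(1))\log n$, both recalled immediately before the theorem.

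For the lower bound $R(D_n,D_n)\ge 2\alpha(n)$, I will exhibit an explicit blue/red coloring of $Q_N$ with $N=2\alpha(n)-1$ that avoids a monochromatic induced $D_n$. Color $X\subseteq[N]$ blue if $|X|<\alpha(n)$ and red otherwise. Any induced copy of $D_n$ has a bottom $b$, a top $t$, and an $n$-antichain strictly between them, which forces $[b,t]\cong Q_{|t\setminus b|}$ to contain an antichain of size $n$ and hence $|t\setminus b|\ge\alpha(n)$ by minimality of $\alpha$. A blue copy would then require $|b|,|t|<\alpha(n)$ while simultaneously $|t|\ge|b|+\alpha(n)\ge\alpha(n)$, a contradiction; a red copy would require $|b|\ge\alpha(n)$ and $|t|\ge|b|+\alpha(n)\ge 2\alpha(n)>N$, again impossible.

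For the upper bound $R(D_n,D_n)\le\alpha(n)+\alpha(2n-1)$, set $N=\alpha(n)+\alpha(2n-1)$, fix a blue/red coloring $c$ of $Q_N$, and partition $[N]=A\sqcup B$ with $|A|=\alpha(n)$ and $|B|=\alpha(2n-1)$. My guiding observation is that any pair $b\subsetneq t$ with $|t\setminus b|=\alpha(2n-1)$ produces an interval $[b,t]\cong Q_{\alpha(2n-1)}$ whose middle layer is an antichain of at least $2n-1$ elements in $Q_N$; pigeonhole yields at least $n$ of them in a common color $\chi$, and if moreover $c(b)=c(t)=\chi$, a $\chi$-colored induced copy of $D_n$ appears immediately. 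Hence the task reduces to locating such a pair $(b,t)$ for which all three color bits match.

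The main obstacle is synchronizing $c(b)$, $c(t)$, and the middle-layer majority of $[b,t]$. I intend to exploit the $2^{\alpha(n)}$-fold freedom in choosing $X\subseteq A$ by setting $I_X:=[X,X\cup B]\cong Q_{\alpha(2n-1)}$ and letting $\sigma(X)$ denote the majority color on the middle layer of $I_X$. If some $X$ realises $c(X)=c(X\cup B)=\sigma(X)$, we are done. Otherwise, every $X\subseteq A$ with $c(X)=c(X\cup B)$ is forced to have middle-layer majority of the opposite color in $I_X$, a strong structural constraint. I plan to combine this with the dual fibration---fixing $Y$ in the middle layer $\mathcal{L}_B$ of $Q_B$ and varying $X\subseteq A$, which gives a copy of $Q_{\alpha(n)}$---and a double counting of blue/red elements in the bipartite strip $\{X\cup Y:X\subseteq A,\ Y\in\mathcal{L}_B\}$ to either upgrade a local $\sigma$-colored middle-layer antichain into one with compatible monochromatic bounds, or else reach a direct contradiction with the forbidden-pattern constraint. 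Once the two displayed bounds are established, the statements $R(D_n,D_n)\le 2\alpha(n)+2$ and $R(D_n,D_n)=(2+o(1))\log n$ follow immediately from the $\alpha$-estimates quoted above.
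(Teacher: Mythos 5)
Your lower bound is fine: it is the same construction as the paper's (threshold coloring at level $\alpha(n)$ of $Q_{2\alpha(n)-1}$), and your interval argument via $|t\setminus b|\ge\alpha(n)$ is a correct way to rule out both colors. The "in particular" statements also follow as you say. The problem is the upper bound: what you give is a reduction plus a declared intention, not a proof. The whole difficulty of $R(D_n,D_n)\le\alpha(n)+\alpha(2n-1)$ is exactly the synchronization step you flag as "the main obstacle," and at that point your text switches to "I intend to" and "I plan to combine this with the dual fibration \dots and a double counting \dots to either upgrade \dots or else reach a direct contradiction," with no actual argument. Moreover, the structural information you propose to exploit can be vacuous: restricting attention to the intervals $I_X=[X,X\cup B]$, there are colorings in which $c(X)\neq c(X\cup B)$ for every $X\subseteq A$ (e.g.\ color $Z$ by whether $|Z\cap B|$ is below or above $|B|/2$), so "every $X$ with $c(X)=c(X\cup B)$ has opposite middle-layer majority" carries no information, and your fallback double-counting scheme is not specified in any checkable way. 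As written, the upper bound is a genuine gap.

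For comparison, the paper resolves the synchronization with a small case analysis rather than counting: take $\cS$ to be layer $\lfloor\alpha(n)/2\rfloor$ and $\cT$ to be layer $N-\lfloor\alpha(n)/2\rfloor$ of $Q_N$, $N=\alpha(n)+\alpha(2n-1)$. If, say, $\cT\cup\{[N]\}$ is not monochromatic, pick $Y\in\cT$ with $c(Y)\neq c([N])$; if also some $X\in\cS$ with $X\subseteq Y$ has $c(X)\neq c(\varnothing)$, then the interval $[X,Y]$ has dimension at least $N-\alpha(n)=\alpha(2n-1)$, hence contains an antichain of size $2n-1$, and whichever color gets $n$ of its members can be matched with a bottom from $\{\varnothing,X\}$ and a top from $\{Y,[N]\}$, since each of these pairs contains both colors. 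The remaining cases (one of $\cS\cup\{\varnothing\}$, $\cT\cup\{[N]\}$, or the restricted family $\{Z\in\cS: Z\subseteq Y\}\cup\{\varnothing\}$ monochromatic) are handled directly, because the monochromatic layer itself is an antichain of size at least $\binom{\alpha(n)}{\lfloor\alpha(n)/2\rfloor}\ge n$ and one of the two candidate endpoints on the other side must match its color. Some device of this "two candidate bottoms, two candidate tops" type is what your outline is missing; without it (or a worked-out counting argument in its place), your upper bound does not go through.
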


Note that  for some values of $n$, $\alpha(2n-1)\le \alpha(n)+1$, in which case the above upper and lower bounds on $R(Q_n,Q_n)$ differ by $1$.\\

For the next result, we need two further extremal parameters. Given $n,N\in\N$ with $N\ge \alpha(n)$, let $\beta(N,n)$ and $N^*(n)$ be integers with
$$\beta(N,n)=\min\big\{\beta: ~ \binom{N}{\beta}\ge n\big\}\ \text{ and }\ N^*(n)=\max\big\{N\ge\alpha(n): ~ N-\beta(N,n)< \alpha(n)\big\},$$
as illustrated in Figure \ref{fig:Nstar}. 
Note that $\binom{N}{\lfloor \alpha(n)/2\rfloor}\ge \binom{\alpha(n)}{\lfloor \alpha(n)/2\rfloor}\ge n > \binom{N}{0}$, so $1\le \beta(N,n) \le \alpha(n)/2$.
Thus, $\alpha(n)\le N^*(n)< \beta(N^*(n),n)+\alpha(n)\le 2\alpha(n)$, so in particular $\beta(N,n)$ and $N^*(n)$ are well-defined.

\begin{theorem}\label{thm:VnVn}
For every $n\in\N$, $N^*(n)+1\le R(V_n,V_n)\le N^*(n)+3$.\\
Let $d=\frac{1}{1-c}$ where $c$ is the unique real solution of $\log\big(c^{-c}(1-c)^{c-1}\big)=1-c$, i.e., $d\approx 1.29$.
Then $R(V_n,V_n)=(d+o(1))\log(n).$
\end{theorem}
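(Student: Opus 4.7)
I plan to establish the double inequality $N^*(n)+1\le R(V_n,V_n)\le N^*(n)+3$ by a matching construction and a pigeonhole-style covering argument, then extract the asymptotic constant $d$ from the asymptotic behaviour of $N^*(n)$ using binary entropy.

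\emph{Lower bound.} To show $R(V_n,V_n)\ge N^*(n)+1$, I would exhibit a 2-colouring of $Q_N$ for $N=N^*(n)$ avoiding a monochromatic $V_n$. The natural candidate is the size-threshold colouring with threshold $\beta:=\beta(N,n)$: colour $X$ blue if $|X|<\beta$ and red otherwise. A blue $V_n$ would require an antichain of $n$ blue vertices, all lying in layers $0,\dots,\beta-1$; since $\beta-1\le\alpha(n)/2\le N/2$, the LYM inequality caps such an antichain at $\binom{N}{\beta-1}<n$ by the choice of $\beta$. A red $V_n$ with root $X$ would need an antichain of size $n$ in $[X,[N]]\cong Q_{N-|X|}$, forcing $N-|X|\ge\alpha(n)$; combined with $|X|\ge\beta$ this yields $N-\beta\ge\alpha(n)$, contradicting the defining property of $N^*(n)$.

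\emph{Upper bound.} For $R(V_n,V_n)\le N^*(n)+3$, set $N=N^*(n)+3$ and consider an arbitrary blue/red colouring of $Q_N$. Writing $\beta:=\beta(N,n)$, the inequality $N-\beta\ge\alpha(n)$ holds by the definition of $N^*$, so the interval above any vertex $X$ of size at most $N-\alpha(n)$ has dimension $\ge\alpha(n)$ and hence contains an antichain of size $n$. The plan is to locate a monochromatic "root" $X$ with $|X|\le N-\alpha(n)$ and to argue that $[X,[N]]$ contains an antichain of size $n$ of the same colour as $X$. A natural route: take a maximal chain $\emptyset=C_0\subsetneq\cdots\subsetneq C_N=[N]$ and use pigeonhole on the three specific levels $\{0,\beta,N-\alpha(n)\}$, which fit inside the chain thanks to the $+3$ slack, to extract two vertices $C_i\subsetneq C_j$ of the same colour; then, within the Boolean sub-lattice $[C_i,[N]]$, apply a layer-wise antichain count (splitting on whether $|C_i|<\beta$ or $|C_i|\ge\beta$) to produce the desired monochromatic antichain. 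The main obstacle is that this must rule out both colour classes simultaneously: concretely, one must show that no colouring of $Q_N$ can block both a blue $V_n$ and a red $V_n$, and the extra $+2$ over $N^*(n)+1$ is precisely what makes the relevant counting go through.

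\emph{Asymptotics.} With the sandwich $N^*(n)+1\le R(V_n,V_n)\le N^*(n)+3$ established, it remains to compute $N^*(n)$ asymptotically. Writing $N=c_1\log n$, $\alpha(n)=(1+o(1))\log n$ and $\beta(N,n)=\gamma N$, Stirling's formula yields $\binom{N}{\gamma N}=2^{(H(\gamma)+o(1))N}$, where $H(x)=-x\log x-(1-x)\log(1-x)$ is the binary entropy. The constraint $\binom{N}{\beta}\ge n$ translates to $H(\gamma)\ge (1-o(1))/c_1$, while the extremal case $N-\beta=\alpha(n)$ in the definition of $N^*(n)$ becomes $\gamma=1-1/c_1+o(1)$. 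Setting $c:=1-1/c_1$, so that $d:=c_1=1/(1-c)$, the tightness condition reads $H(c)=1-c$, i.e.\ $-c\log c-(1-c)\log(1-c)=1-c$, equivalent to $\log(c^{-c}(1-c)^{c-1})=1-c$, which is precisely the stated defining equation for $c$. Numerically $d\approx 1.29$, and the sandwich yields $R(V_n,V_n)=(d+o(1))\log n$ as claimed.
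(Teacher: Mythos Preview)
Your lower bound construction and the asymptotic calculation of $N^*(n)$ are correct and essentially identical to the paper's (the paper swaps the roles of red and blue in the construction, and its appendix carries out exactly the entropy computation you describe).

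The gap is in your upper bound. Pigeonholing on three chain vertices $C_0,C_\beta,C_{N-\alpha(n)}$ only yields two vertices $C_i\subset C_j$ of the same colour; it gives you no control over the colours of the antichain you then try to build above $C_i$. Knowing merely that $[C_i,[N]]$ has dimension at least $\alpha(n)$ guarantees an antichain of size $n$ there, but not a \emph{monochromatic} one in the colour of $C_i$. You flag this yourself as ``the main obstacle'' and assert that the extra $+2$ ``makes the relevant counting go through'', but you never say how; as stated, the plan does not close.

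The paper's argument supplies the missing idea. First, it does not work at $N^*+3$ directly but at $N_+$, the least $N$ with $N-\beta(N,n)\ge\alpha(2n-1)$ (note $2n-1$, not $n$), and then shows $N_+\le N^*+3$ via $\alpha(2n-1)\le\alpha(n)+2$. Second, instead of a chain pigeonhole, it looks at the \emph{entire} layer $\beta_+=\beta(N_+,n)$: if that layer shares the colour of $\varnothing$, it already provides a monochromatic $V_n$ with root $\varnothing$; otherwise there is a vertex $X$ of the opposite colour with $|X|=\beta_+$. Now the interval above $X$ has dimension at least $\alpha(2n-1)$, so it contains an antichain $\mathcal{A}$ of size $2n-1$ (with $X\notin\mathcal{A}$). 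By pigeonhole, $n$ of these share a colour, and since $\varnothing$ and $X$ have different colours, one of them matches and serves as the root. The point you are missing is precisely this two-candidate-root trick combined with the $\alpha(2n-1)$ bound; without it, there is no way to force the antichain to be monochromatic in the right colour.
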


Similarly to Theorem \ref{thm:DnDn}, by using more careful estimates our proof provides $R(V_n,V_n)\le N^*(n)+2$ whenever $\alpha(2n-1)\le \alpha(n)+1$.
\\

The poset Ramsey number $R(P,Q)$ concerns induced copies of posets $P$ and $Q$. A variation of this Ramsey-type problem deals with so called \textit{weak copies}.
Let $P$ and $Q$ be two posets. A \textit{weak embedding} $\psi\colon P\to Q$ is an injective function such that for any two elements $X,Y\in P$,
$X\le_P Y$ implies $\psi(X)\le_Q\psi(Y)$. In this setting, we allow that $\psi(X)\le_Q \psi(Y)$ even if $X\not \le_P Y$.
We say that the image of $\psi$ is a \textit{weak copy} of $P$ in $Q$.
Analogously to the poset Ramsey number $R(P,Q)$, we define the \textit{weak poset Ramsey number} for posets $P$ and $Q$ as
\begin{multline*}
\Rw (P,Q)=\min\{N\in\N \colon \text{ every blue/red coloring of $Q_N$ contains either}\\ 
\text{a blue weak copy of $P$ or a red weak copy of $Q$}\}.
\end{multline*}

It is a basic observation that $\Rw (P,Q)\le R(P,Q)$ for any posets $P$ and $Q$.
The best known bounds in the diagonal setting $Q_n$ versus $Q_n$ are a lower bound by Cox and Stolee \cite{CS} and an upper bound by Lu and Thompson \cite{LT},
$$2n+1\le \Rw(Q_n,Q_n)\le R(Q_n,Q_n)\le n^2-n+2.$$
Gr\'osz, Methuku and Tompkins \cite{GMT} showed in the off-diagonal setting that
$\Rw (Q_m,Q_n)\ge m+n+1$ for $m\ge 2$ and $n\ge 68$, and $\Rw(Q_m,Q_n)\le n+2^m-1$, where the second bound is derived from a result by Cox and Stolee \cite{CS}.
\\

Our final result is an improvement of the upper bound on $\Rw(Q_n,Q_n)$. 

\begin{theorem}\label{thm:weakQnQn}
For sufficiently large $n$, $\Rw (Q_n,Q_n)\le 0.96n^2$.
\end{theorem}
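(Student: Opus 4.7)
The plan is to exploit the extra flexibility of weak embeddings (which only require order-preservation, not order-reflection) to gain a constant factor improvement over the bound $R(Q_n,Q_n)\le n^2-n+2$. Note that Corollary \ref{cor:QnQnUB} only saves $O(n\log n)$ in the induced setting, so it cannot reach $0.96n^2$; a genuinely different, weak-specific argument is needed to shave off a linear-in-$n^2$ term.

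First I would fix a small constant $\varepsilon>0$ with $1-\varepsilon$ slightly below $1$ (with $\varepsilon$ eventually set so that $(1-\varepsilon)\le 0.96$), take $N=0.96n^2$, and partition $[N]$ into $n$ blocks $B_1,\dots,B_n$ of size roughly $0.96n$ each. Inside each block $B_i$, fix a maximal chain $\emptyset=C_i^0\subsetneq C_i^1\subsetneq\dots\subsetneq C_i^{|B_i|}=B_i$. The classical blob construction assigns to $S\subseteq[n]$ the single set $\bigcup_{i\in S} B_i$, which forces an induced $Q_n$; the new idea is to instead assign to each $S$ an entire family $\calF_S$ of candidate images of the form $\bigcup_i C_i^{f(i)}$, where the choice function $f\colon[n]\to\{0,\dots,|B_i|\}$ is constrained (loosely) by $f(i)\ge 1$ iff $i\in S$. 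Because weak embeddings allow $\psi(S)\subseteq\psi(T)$ even when $S\not\subseteq T$, the families $\calF_S$ can be much richer than a single set.

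Given the coloring, I would then build the weak embedding $\psi\colon Q_n\to Q_N$ by a greedy, layer-by-layer argument from $\emptyset$ up through $[n]$: at each step, pick a candidate of the dominant (say blue) color in $\calF_S$ that is compatible with all previous choices $\psi(S')\subseteq\psi(S)$ for $S'\subsetneq S$. Since each $\calF_S$ is large (exponential in $n$), one expects that the blue fraction, combined with the compatibility constraint, still leaves at least one valid candidate at every step. This essentially reduces the problem to a Ramsey-type statement on $2$-colorings of the grid $\prod_i [|B_i|+1]$: any such coloring must contain a monochromatic weak $Q_n$, provided the block sizes are at least $0.96n$.

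The main obstacle will be making the greedy argument quantitative and extracting the exact constant $0.96$. The delicate balance is between the slight shrinkage of each block below $n$ (which cuts down the candidate count per block from $n+1$ to $0.96n+1$) and the gain from not needing an induced embedding, which opens up many more viable choice functions $f$. Carrying this out will likely require a careful averaging or entropy-style estimate on how often blue (or red) candidates survive the compatibility constraints as we walk up from $\emptyset$ to $[n]$; this is where the specific numerical value $0.96$ should emerge.
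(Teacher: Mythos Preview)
Your proposal is not a proof but a plan with its central steps left as hopes. Two concrete gaps:

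First, the reduction to a grid Ramsey statement does not simplify the problem. After your block decomposition, the sets of the form $\bigcup_i C_i^{f(i)}$ form a subposet of $\QQ([N])$ isomorphic to the grid $\prod_i\{0,\dots,|B_i|\}$, and you are asking whether every $2$-coloring of this grid contains a monochromatic weak copy of $Q_n$. You give no argument for this; the greedy scheme you outline (``pick a majority-color candidate compatible with all previous choices'') is not justified, because as $|S|$ grows the compatibility constraints $\psi(S)\supseteq\psi(S')$ for all $S'\subsetneq S$ cut the candidate family $\cF_S$ down in a way you do not control. Saying ``one expects'' and that the constant ``should emerge'' from an unspecified entropy estimate is exactly the missing idea, not a substitute for it.

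Second, nothing in your outline singles out $0.96$. In the paper's proof the constant comes from a concrete optimization: one builds a \emph{sausage chain} $\cS_0<\cdots<\cS_{\qn}<\cB<\cS'_{\qn}<\cdots<\cS'_0$ in $\QQ([N])$, where each sausage $\cS_i$ is a tiny Boolean lattice of dimension about $\log\binom{n}{i}$ and the middle $\cB$ is a large blob. A weak copy of $Q_n$ is found as $P_0<\cdots<P_{s-1}<Q_s^t<P'_{t+1}<\cdots<P'_n$, where each $P_i$ is an \emph{arbitrary} poset of size $\binom{n}{i}$ sitting inside some sausage (using only that half the sausages have a common majority color), and $Q_s^t$ is an induced $(s,t)$-truncated $Q_n$ inside $\cB$ obtained via Lemma~\ref{lem:truncatedCompletion}(iv). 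The required dimension is $(1-q)n^2$ for $\cB$ plus $2\sum_{i\le qn}\log\binom{n}{i}\approx 2n^2\int_0^q H(s)\,ds$ for the sausages; minimizing $(1-q)+2\int_0^q H(s)\,ds$ gives $H(q)=\tfrac12$, $q\approx 0.11$, and total $\approx 0.956n^2<0.96n^2$. The saving comes precisely from the fact that the extreme layers of $Q_n$ are tiny and, in a weak copy, may be embedded as unstructured sets totally ordered with respect to one another---an idea absent from your block/grid picture.
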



In Section \ref{sec:QnQn_prelim}, we introduce some notation and definitions, and discuss the \textit{Blob Lemmas}.
In Section \ref{sec:QmQn}, we present a proof of Theorem \ref{thm:QmQn}. 
In Section \ref{sec:DnVn}, we prove Theorems \ref{thm:DnDn} and \ref{thm:VnVn}.
A proof of Theorem \ref{thm:weakQnQn} is given in Section \ref{sec:weakQnQn}.
We omit floors and ceilings where appropriate. We denote the set $\{1,\dots,n\}$ of the first $n$ natural numbers by $[n]$.
%

\section{Notation and preliminary results}\label{sec:QnQn_prelim}

\subsection{Notation and definitions}\label{sec:QmQn_def}

In this paper we use capital letters to denote subsets of the ground set in a Boolean lattice, i.e., the vertices of Boolean lattices. 
We use bold capital letters for significant ground sets. 
Calligraphic capital letters are used for families of sets and thus for subposets of the Boolean lattice.
Abstract posets which are not considered as subposets of a specific Boolean lattice are denoted by capital letters, for example $Q_n$, $D_n$ and $V_n$.
\\

Given a set $\bZ$ with disjoint subsets $S,T\subseteq \bZ$, we define a \textit{blob} in a Boolean lattice $\QQ(\bZ)$ as 
$\cB(S;T)=\{Z\subseteq \bZ: S\subseteq Z\subseteq S\cup T\}.$
We call $T$ the \textit{variable set} of this blob.
Note that $\cB(S;T)$ is a Boolean lattice of dimension $|T|$. We say that $|T|$ is the {\it dimension} of the blob. 
We remark that every $Z\in\cB(S;T)$ has the form $S\cup T_Z$ where $T_Z\subseteq T$.
A $t$-{\it truncated blob}, denoted $\cB(S; T; t)$, is the poset $\{ Z \in \cB(S; T): ~ |Z| \leq t\}$. We also say that $\cB(S;T; t)$ has \textit{dimension} $|T|$.
Given a Boolean lattice $\QQ(\bX)$ on ground set $\bX$ and a non-negative integer $t$, $t\leq |\bX|$,  let $\QQ(\bX)^{t}$ denote the \textit{$t$-truncated Boolean lattice}, 
that is the subposet $\{Z\in\QQ(\bX):~ |Z|\le t\}=\cB(\varnothing; \bX; t)$. 
Given two non-negative integers $s$ and $t$, $0\leq s\leq t  \leq |\bX|$,   let $\QQ(\bX)_{s}^t$ denote the \textit{$(s,t)$-truncated Boolean lattice}, 
that is the subposet $\{Z\in\QQ(\bX):~  s \leq |Z|\le t\}$. In particular, $\QQ(\bX)^{t} = \QQ(\bX)^t_0$. 
\\

Let $\bX$ and $\bY$ be disjoint, non-empty sets. Let $\phi\colon \QQ(\bX)^t\to \QQ(\bX\cup\bY)$ be a (strong) embedding.
The function $\phi$ is \textit{$\bX$-good} if $\phi(X)\cap\bX=X$ for every $X\in\QQ(\bX)^{t}$. Note that a $t$-truncated blob $\cB(S; T; t)$ is the image of a $T$-good embedding of $\QQ(T)^{t}$. 
We say that $\phi$ is \textit{red} if its image is a red poset, i.e.\ $\phi$ maps only to red vertices, and \textit{blue} if its image is a blue poset.
If $\phi$ is an embedding of a poset $\cA$ into a Boolean lattice, we use the notation  $\phi(\cA)$ for $\{\phi(X): ~X\in \cA\}$. 
For a subposet $\cA$ of a Boolean lattice, we say that the {\it volume} of  $\cA$, denoted $\Vol(\cA)$, is the total number of ground elements in all vertices of $\cA$, i.e., $\Vol(\cA) = |\bigcup_{X\in \cA} X|$.
\\

\subsection{Blob Lemmas}\label{sec:QmQn_blob}
In order to show an upper bound on $R(Q_m,Q_n)$, we have to find a blue copy of $Q_m$ or a red copy of $Q_n$ in every coloring of a host Boolean lattice.
Using our notation, let us briefly reiterate the basic approach used for previously known upper bounds, see Lemma 1 in \cite{KT}, Blob Lemma in \cite{AW}, and Lemma 1 in \cite{LT}.

\begin{lemma}[Blob Lemma] \label{lem:blob_restated}
Let $n,m\in\N$ and $N=nm+n+m$.
Any blue/red coloring of $\QQ([N])$ contains a blue copy of $Q_m$ or a red copy of $Q_n$. 
\end{lemma}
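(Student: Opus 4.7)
Plan: My plan is to proceed by induction on $n$, exploiting the identity $N = nm + n + m = \bigl((n-1)m + (n-1) + m\bigr) + (m+1)$, so that removing $m+1$ ground elements from $[N]$ reduces the problem to the $(n-1,m)$ case. The base case $n = 0$ is immediate: either some vertex of $\QQ([m])$ is red, giving a red $Q_0$, or every vertex is blue, giving a blue $Q_m$.

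For the inductive step, I partition $[N] = A \cup B$ with $|A| = m+1$ and $|B| = N - m - 1 = (n-1)m + (n-1) + m$. Applying the inductive hypothesis to the coloring restricted to the sub-Boolean lattice $\QQ(B) \subseteq \QQ([N])$ yields either a blue induced copy of $Q_m$ (and we are done) or a red induced copy $\mathcal{R}$ of $Q_{n-1}$ sitting inside $\QQ(B)$. Given such an $\mathcal{R}$, I would then look for an element $a \in A$ such that the shifted family $\mathcal{R} + a := \{X \cup \{a\} : X \in \mathcal{R}\}$ is also entirely red; since $a \notin B$, gluing $\mathcal{R}$ and $\mathcal{R} + a$ produces a strong embedding of $Q_1 \times Q_{n-1} \cong Q_n$ into the red vertices of $\QQ([N])$.

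The main obstacle is the complementary case, where no such $a$ exists: for every $a \in A$ there is a witness $X_a \in \mathcal{R}$ with $X_a \cup \{a\}$ blue. Extracting a blue induced $Q_m$ from these $m+1$ blue witnesses is the heart of the proof, and is the reason a naive induction on $n$ alone is insufficient. To handle it, I would strengthen the inductive hypothesis so that, alongside the red $Q_{n-1}$ in $\QQ(B)$, one simultaneously maintains an auxiliary "probe blob" of $m$ further free ground elements above the red structure; when every shift by an element of $A$ fails, the blue witnesses above different $X_a$ organize, via a pigeonhole over $A$, into a blue induced copy of $Q_m$ inside one of these probe blobs. This is precisely the role of the $\bX$-good embeddings and truncated blobs $\cB(S;T;t)$ introduced in Section~\ref{sec:QmQn_def}, which is why the lemma is phrased in terms of blobs rather than of a bare Boolean lattice.
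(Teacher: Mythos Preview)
Your proposal has a genuine gap in the ``obstacle case''. You correctly identify it as the heart of the argument, but the sketch you give for handling it does not work. When every $a\in A$ has a witness $X_a\in\mathcal{R}$ with $X_a\cup\{a\}$ blue, you obtain $m+1$ blue vertices, but these vertices carry no usable structure: the $X_a$ are arbitrary members of an arbitrary induced copy of $Q_{n-1}$, so the sets $X_a\cup\{a\}$ need not be pairwise comparable, let alone form an induced $Q_m$. A pigeonhole over $A$ cannot repair this, and your proposed ``probe blob'' strengthening of the inductive hypothesis is not stated precisely enough to carry weight; as written there is no clear inductive statement that both survives the passage from $n-1$ to $n$ and produces a blue $Q_m$ in this case. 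Your closing remark also misreads the paper: truncated blobs and $\bX$-good embeddings are machinery for Theorem~\ref{thm:QmQn}, not for Lemma~\ref{lem:blob_restated}, whose statement is a plain Ramsey assertion with no blobs in it.

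The paper's proof avoids this obstacle entirely by abandoning induction on $n$. It partitions $[N]$ once and for all into an $n$-set $\bX$ and $n+1$ disjoint $m$-sets $\bY^{(0)},\dots,\bY^{(n)}$, and to each $X\subseteq\bX$ assigns the $m$-dimensional blob $\cB_X=\cB\big(X\cup\bigcup_{i<|X|}\bY^{(i)};\,\bY^{(|X|)}\big)$. Either some $\cB_X$ is entirely blue (a blue $Q_m$ for free), or one can pick a red $Z_X\in\cB_X$ for every $X$. The layered choice of blobs forces $Z_X\cap\bX=X$, so $X\mapsto Z_X$ is automatically an embedding, yielding the red $Q_n$. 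The key difference from your approach is that the red structure being built is \emph{controlled} ($\bX$-good) rather than an arbitrary induced copy; this is exactly what makes the ``extend by one more layer'' step succeed without ever needing to scavenge a blue $Q_m$ from scattered witnesses.
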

\begin{proof}
Partition $[N]$ arbitrarily into sets $\bX, \bY^{(0)}, \bY^{(1)},\dots, \bY^{(n)}$ such that $|\bX|=n$ and $|\bY^{(i)}|=m$, $i\in\{0,\dots,n\}$.
We construct a red embedding $\phi\colon \QQ(\bX)\to\QQ([N])$.
For each $X\in \QQ(\bX)$, consider the blob $\cB_X=\cB(X\cup \bigcup_{i=0}^{|X|-1} \bY^{(i)} ;\bY^{(|X|)})$, 
where we use the convention that $\bigcup_{i=0}^{-1}\bY^{(i)}=\varnothing$.
If one of the blobs is monochromatic blue, it is a blue copy of $Q_m$ as desired.

Suppose that there is a red vertex $Z_X\in\cB_X$ for every $X$.
Then the function $\phi\colon \QQ(\bX)\to\QQ([N])$, $\phi(X)=Z_X$ has a red image.
Observe that $\phi(X)\cap\bX=X$ for every $X\subseteq\bX$.
Then it is straightforward to verify that $\phi$ is an embedding. Therefore, there is a red copy of $Q_n$.
\end{proof}

The general proof idea of our bound on $R(Q_m,Q_n)$ is to refine Lemma \ref{lem:blob_restated} by 
considering truncated blobs instead of blobs, moreover those chosen based  on already embedded layers.
In addition, we control the volume of truncated blobs while constructing the embedding.
For this we need parts (i),(ii),(iii) of the following variant of Lemma \ref{lem:blob_restated}.
Part (iv) will be applied to achieve an upper bound on $\Rw(Q_n,Q_n)$.

\begin{lemma}[Truncated Blob Lemma]\label{lem:truncatedCompletion}
Let $n,m,t, N\in\N$. Let $\bX\subseteq [N]$.  Fix a blue/red coloring of the Boolean lattice $\QQ([N])$.
\begin{enumerate}
\item[(i)] If $|\bX|=n$, $t\leq n$, and there is an $\bX$-good, red embedding $\phi\colon \QQ(\bX)^t\to \QQ([N])$ such that $\Vol({\phi}(\QQ(\bX)^t) \le N -(n-t)m$,
then $\QQ([N])$ contains a blue copy of $Q_m$ or a red copy of $Q_n$.
\item[(ii)] If $|\bX|=m$, $t\leq m$,  and  there is an $\bX$-good, blue embedding $\phi\colon \QQ(\bX)^t\to \QQ([N])$ such that $\Vol({\phi}(\QQ(\bX)^t) \le N -(m-t)n$,
then $\QQ([N])$ contains a blue copy of $Q_m$ or a red copy of $Q_n$.
\item[(iii)] If $|\bX|=n$, $t\leq n$,  and there is a set $S$ disjoint from $\bX$ and a red truncated blob $\cB(S; \bX; t)$ such that $|S\cup \bX|\leq N-(n-t)m$, then there is a blue copy of $Q_m$ or a red copy of $Q_n$.
\item[(iv)] If  $0\leq s\leq t \leq n$, and $N= (t-s+2)n$, then $\QQ([N])$ contains  a blue copy of $Q_n$ or a red copy of 
$\QQ([n])^{t}_{s}$.
\end{enumerate}

\end{lemma}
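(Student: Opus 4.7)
The plan for all four parts is to iterate the blob construction of Lemma~\ref{lem:blob_restated}, extending a partial $\bX$-good, colour-monochromatic embedding one layer at a time by allocating a fresh ``variable set'' at each step; a monochromatic blob of the opposite colour serves as escape. For part~(i), set $U=\bigcup_{X\in\QQ(\bX)^t}\phi(X)$, so $|U|\le N-(n-t)m$ and $\bX\subseteq U$ by $\bX$-goodness on singletons. Choose pairwise disjoint $m$-sets $\bY^{(t)},\ldots,\bY^{(n-1)}\subseteq[N]\setminus U$ and extend $\phi$ inductively to the layers $k=t+1,\ldots,n$ as follows: for each $X\in\QQ(\bX)$ with $|X|=k$ form the dimension-$m$ blob
\[
\cB_X \;=\; \cB\Bigl(X\cup\!\!\!\bigcup_{\substack{X'\subsetneq X\\|X'|=k-1}}\!\!\phi(X')\cup\bigcup_{i=t}^{k-2}\bY^{(i)};\;\bY^{(k-1)}\Bigr).
\]
If some $\cB_X$ is all blue we obtain a blue copy of $Q_m$ and are done; otherwise fix a red $Z_X\in\cB_X$ and set $\phi(X)=Z_X$. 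The base of $\cB_X$ meets $\bX$ in exactly $X$ and $\bY^{(k-1)}\cap\bX=\varnothing$, so $\bX$-goodness is preserved; nesting of bases yields $\phi(X')\subseteq\phi(X)$ whenever $X'\subsetneq X$; and for $X,Y$ incomparable in $\bX$, $\phi(X)\cap\bX=X\not\subseteq Y=\phi(Y)\cap\bX$ forces incomparability of images. Thus the extended $\phi$ is a red embedding of $Q_n$.

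\textbf{Parts~(ii) and~(iii).} Part~(ii) is strictly dual: $\phi$ is blue, the $m-t$ new variable sets have size $n$ (covered by the hypothesis $\Vol(\phi(\QQ(\bX)^t))\le N-(m-t)n$), and a red copy of $Q_n$ arises whenever some blob is monochromatic red. Part~(iii) is immediate from~(i) applied to the map $X\mapsto S\cup X$, which is an $\bX$-good red embedding of $\QQ(\bX)^t$ with image volume $|S\cup\bX|\le N-(n-t)m$.

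\textbf{Part~(iv).} Partition $[N]$ into an $n$-set $\bX$ and $n$-sets $\bY^{(s)},\bY^{(s+1)},\ldots,\bY^{(t)}$, which is possible because $N=(t-s+2)n$. For each $X\subseteq\bX$ with $s\le|X|=k\le t$ consider the dimension-$n$ blob
\[
\cB_X \;=\; \cB\Bigl(X\cup\bigcup_{i=s}^{k-1}\bY^{(i)};\;\bY^{(k)}\Bigr).
\]
If some $\cB_X$ is entirely blue it contains a blue copy of $Q_n$; otherwise picking a red $Z_X\in\cB_X$ for each $X$ yields, by the same nesting-of-bases argument as in~(i), a red embedding of $\QQ(\bX)^t_s\cong\QQ([n])^t_s$.

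\textbf{Main obstacle.} The only real work is bookkeeping: checking that blobs at successive layers nest correctly. The crucial design choice is that the base of $\cB_X$ at layer $k$ must include (a) the set $X$ itself, to keep $\bX$-goodness, (b) the $\phi$-images of all size-$(k{-}1)$ subsets of $X$, to enforce $\phi(X')\subseteq\phi(X)$, and (c) the union $\bigcup_{i<k-1}\bY^{(i)}$ of previously used variable sets, so that any lower-layer choice $T_{X'}\subseteq\bY^{(|X'|)}$ automatically lies in $\phi(X)$. Once these inclusions are built in, the volume hypotheses of~(i)--(iii) and the identity $N=(t-s+2)n$ of~(iv) supply exactly the ground elements needed for the fresh variable sets.
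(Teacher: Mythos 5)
Your proposal is correct and follows essentially the same layer-by-layer blob construction as the paper's proof, including the identical observation for part (iii) and the same partition-into-$n$-sets argument for part (iv). The only (immaterial) difference is in the blob bases for parts (i)--(ii): the paper includes all of $\bU\setminus\bX$ (the union of all previously embedded images) so that containments across the truncation threshold are immediate, whereas you include only the images of the co-dimension-one subsets of $X$ together with the earlier variable sets and recover the remaining containments by transitivity along saturated chains, which works just as well.
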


\begin{proof}
\textbf{(i):} We shall extend $\phi$ to a red embedding $\phi'\colon \QQ(\bX)\to \QQ([N])$.
As in the proof of Lemma \ref{lem:blob_restated}, we select disjoint sets of ground elements and use them to define a blob for each not yet embedded $X\in\QQ(\bX)$.
Let $\bU = \bigcup_{X\in\QQ(\bX)^t} \phi(X)$ and note that $|\bU|= \Vol(\phi(\QQ(\bX)^t))$.
Since $\phi$ is $\bX$-good,  $\bX\subseteq \bU$. Thus, $$| [N]\setminus (\bU\cup \bX) |=|[N]\setminus \bU|=N - \Vol(\phi(\QQ(\bX)^t))\ge (n-t)m.$$
Fix $n-t$ pairwise disjoint $m$-element subsets $\bY^{(t+1)},\dots,\bY^{(n)}$ of $[N]\setminus \bU$.
For every $X\in\QQ(\bX)$ with $|X|>t$, consider the blob
$$\cB_X=\cB\left(X\cup (\bU\setminus \bX) \cup \bigcup_{i=t+1}^{|X|-1}\bY^{(i)}; \bY^{(|X|)}\right).$$
If $\cB_X$ is blue, it is a blue copy of $Q_m$, so suppose that there is a red vertex $Z_X\in\cB_X$.
Let $\phi'\colon \QQ(\bX)\to \QQ([N])$ with
$$\phi'(X)=\begin{cases}\phi(X) &\text{ if }|X|\le t,\\ Z_X & \text{ if }|X|> t.\end{cases}$$
The image of $\phi'$ is red. Now we verify that $\phi'$ is an embedding. 
Observe that $\phi'(X)\cap\bX=X$ for every $X\subseteq \bX$. Let $X_1, X_2\subseteq \bX$.
If $\phi'(X_1)\subseteq \phi'(X_2)$, then $X_1=\phi'(X_1)\cap\bX\subseteq \phi'(X_2)\cap\bX =X_2$.
Conversely, suppose that $X_1\subseteq X_2$. 
\\
If $|X_1|\le |X_2|\le t$, then $\phi'(X_1)=\phi(X_1)\subseteq \phi(X_2)=\phi'(X_2)$ because $\phi$ is an embedding.
\\
If $|X_1|\le t < |X_2|$, we have $\phi'(X_1)=X_1\cup (\phi(X_1)\setminus\bX)\subseteq X_2 \cup (\bU\setminus\bX) \subseteq Z_{X_2}=\phi'(X_2)$.
\\
If $t<|X_1|\le|X_2|$, then $\phi'(X_1)\in \cB_{X_1}$, so $\phi'(X_1)\subseteq X_1\cup (\bU\setminus\bX) \cup \bY^{(t+1)}\cup\dots\cup \bY^{(|X_1|)}\subseteq Z_{X_2}=\phi'(X_2)$, as desired.\\

\noindent \textbf{(ii):} This part is proven analogously to part (i).\\

\noindent \textbf{(iii):} Observe that $\cB(S; \bX; t)$ is the image of an $\bX$-good embedding $\phi\colon \QQ(\bX)^t\to \QQ(\bX)$, $\phi(X)=S\cup X$, with $\Vol({\phi}(\QQ(\bX)^t)=|S\cup\bX|$.\\

\noindent \textbf{(iv):} 
Let $\bX=[n]$. Choose disjoint, $n$-element sets $\bY^{(s)},\dots,\bY^{(t)}$ of $\bY=[N]\setminus \bX$.
For each $X\in\QQ(\bX)$ with $s\leq |X|\leq t$, we define a blob 
$$\cB_X=\cB\left(X\cup  \bigcup_{i=s}^{|X|-1}\bY^{(i)};\bY^{(|X|)}\right),$$
where $ \bigcup_{i=s}^{|X|-1}\bY^{(i)} = \varnothing$ for $|X|=s$.
If $\cB_X$ is blue, it corresponds to a blue copy of $Q_n$. So let $\phi(X)$ be a red vertex in $\cB_X$.
Then, we see that $\{\phi(X): X\in \bX\}$ is a red copy of $\QQ(\bX)^{t}_{s}$.
\end{proof}

\section{Upper bound on Ramsey number  $R(Q_m,Q_n)$}\label{sec:QmQn}
First, we need the following technical proposition.
\begin{proposition}\label{prop:epsilon}
Let $2^{25}\le m \le n$. Let $\varepsilon=\tfrac{1}{\sqrt{\log m}}$. Then $\tfrac{n+m}{n}\cdot \tfrac{1}{(1-\varepsilon)\log m}+m^{-\varepsilon}\le \varepsilon.$
\end{proposition}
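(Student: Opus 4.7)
The plan is to reduce the inequality to a statement about a single parameter $L=\log m\geq 25$ and then bound the two summands on the left separately by $\varepsilon/2=\frac{1}{2\sqrt L}$. The first simplification is to use $m\leq n$ to control the ratio $\frac{n+m}{n}=1+\frac{m}{n}\leq 2$. This removes $n$ from the inequality and leaves us to prove
$$\frac{2}{(1-\varepsilon)\log m}+m^{-\varepsilon}\;\leq\;\frac{1}{\sqrt{\log m}}.$$
Setting $L=\log m$ so that $\varepsilon=L^{-1/2}$, it suffices to establish
$$\frac{2}{(1-L^{-1/2})L}\;\leq\;\frac{1}{2\sqrt L}\qquad\text{and}\qquad 2^{-\sqrt L}\;\leq\;\frac{1}{2\sqrt L}.$$

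For the first bound I would use that $m\geq 2^{25}$ forces $\varepsilon\leq 1/5$, hence $\frac{1}{1-\varepsilon}\leq \frac{5}{4}$, so the term is at most $\frac{5}{2L}$; the desired inequality $\frac{5}{2L}\leq \frac{1}{2\sqrt L}$ is equivalent to $\sqrt L\geq 5$, which holds since $L\geq 25$. For the second bound, the inequality $2^{-\sqrt L}\leq \frac{1}{2\sqrt L}$ rearranges to $2^{\sqrt L}\geq 2\sqrt L$, which is a standard consequence of $2^x\geq 2x$ for $x\geq 3$ (easily verified at $x=3$ and by comparing derivatives afterwards), and $\sqrt L\geq 5\geq 3$.

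Adding the two halves yields the claimed bound $\varepsilon$. There is no real obstacle here; the one small choice is to split the slack $\varepsilon$ evenly between the two summands, and the threshold $m\geq 2^{25}$ is exactly what makes both halves of the split go through. In particular, the first half needs $\sqrt L\geq 5$ to beat the constant $2$ in the numerator together with the $(1-\varepsilon)^{-1}$ factor, while the second half is comfortable at this range since $2^{-\sqrt L}$ decays much faster than $\frac{1}{2\sqrt L}$.
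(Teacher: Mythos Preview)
Your proof is correct and follows essentially the same approach as the paper: both use $(n+m)/n\le 2$ to eliminate $n$, then bound each of the two summands by $\varepsilon/2$ using the same inequality $\varepsilon\le 1/5$ (equivalently $\sqrt{L}\ge 5$), and your inequality $2^{\sqrt L}\ge 2\sqrt L$ is exactly the paper's $1/\varepsilon\ge 1-\log\varepsilon$ in a different variable. The only difference is cosmetic notation (you work with $L=\log m$, the paper with $\log m=\varepsilon^{-2}$).
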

\begin{proof}
The bound $m\ge 2^{25}$ implies that $\varepsilon=\tfrac{1}{\sqrt{\log m}}\le \tfrac{1}{5}$,
so in particular $4\varepsilon\le 1-\varepsilon$.
Since $m\le n$ and $\log m=\varepsilon^{-2}$, we obtain that
$$
\frac{n+m}{n}\cdot\frac{1}{(1-\varepsilon)\log m}\le \frac{2n}{n}\cdot\frac{\varepsilon^2}{1-\varepsilon}
=\frac{\varepsilon}{2}\cdot \frac{4\varepsilon}{1-\varepsilon}
\le \frac{\varepsilon}{2}.
$$
For $\varepsilon\le \tfrac{1}{5}$, it is straightforward to check that $\tfrac{1}{\varepsilon}\ge 1-\log \varepsilon$. 
Thus, using again that $\log m=\varepsilon^{-2}$,
$$m^{-\varepsilon}=2^{-\frac{1}{\varepsilon}}\le 2^{-1+\log \varepsilon}= \frac{\varepsilon}{2}.$$
Therefore,
$$\frac{n+m}{n}\cdot \frac{1}{(1-\varepsilon)\log m}+m^{-\varepsilon}\le \frac{\varepsilon}{2} + \frac{\varepsilon}{2} = \varepsilon.$$
\end{proof}

\noindent Now we show of our main result.

\begin{proof}[Proof of Theorem \ref{thm:QmQn}]
Fix $n$ and $m$ such that $n\ge m$. Fix an $\varepsilon\in\R$ with $0<\varepsilon<1$ which satifies
\begin{equation}\label{eq:epsilon_condition}
\frac{n+m}{n}\cdot \frac{1}{(1-\varepsilon)\log m}+m^{-\varepsilon}\le \varepsilon.
\end{equation}
Let $$N=n\big(m-(1-\varepsilon)^2\log m\big).$$
To prove the second statement of the theorem, we shall show that $R(Q_m,Q_n)\le N$.
The first statement is a corollary of this: Indeed, if $m\ge 2^{25}$, then Proposition \ref{prop:epsilon} shows that
inequality (\ref{eq:epsilon_condition}) holds for $\varepsilon=\tfrac{1}{\sqrt{\log m}}$, thus 
$$R(Q_m,Q_n)\le n\big(m-(1-\varepsilon)^2\log m\big) \le n\big(m-(1-2\varepsilon)\log m\big)= n\left(m-\big(1-\tfrac{2}{\sqrt{\log m}}\big)\log m\right).$$

Now we proceed with the proof of the second statement in Theorem \ref{thm:QmQn}, i.e., the bound $R(Q_m,Q_n)\le N$. 
Let $t_\mu=(1-\varepsilon) \log m$ and $t_\eta=\tfrac{n}{m}t_\mu$. Note that $0\leq t_\mu \leq n$ and $0\leq t_\mu \leq m$.
In this proof we consider $t_\mu$-truncated blobs of dimension $m$ and $t_\eta$-truncated blobs of dimension $n$. 
It follows that $N= n(m-t_\mu)+\varepsilon nt_\mu=(n-t_\eta)m+\varepsilon mt_\eta$. 

Fix an arbitrary coloring of $\QQ([N])$. We shall find a blue copy of $Q_m$ or a red copy of $Q_n$ in this coloring.
For this, we show that there is a blue embedding $\phi$ of $\QQ([m])^{t_\mu}$ whose image has volume at most $N-(m-t_\mu)n$,
or a red embedding $\phi'$ of $\QQ([n])^{t_\eta}$ whose image has volume at most $N-(n-t_\eta)m$.
In either case, Lemma \ref{lem:truncatedCompletion} gives the desired monochromatic Boolean lattice.
\\

First, we suppose that there exist disjoint sets $S,T\subseteq[N]$ such that $|S|\le \varepsilon mt_\eta-n$ and $|T|=n$, 
for which the truncated blob $\cB(S;T; t_\eta)$ is monochromatically red.
Then $|S\cup T|\le \varepsilon mt_\eta=N-(n-t_\eta)m$. Thus, part (iii) of  Lemma \ref{lem:truncatedCompletion} implies the existence of a blue copy of $Q_m$ or a red copy of $Q_n$, which completes the proof. Thus, from now on we can assume the following:\\

\noindent
{\bf Property $(\ast)$:}
There is a blue vertex  in every truncated blob $\cB(S;T;  t_\eta)$ with dimension $|T|=n$ and volume $|S\cup T| \le \varepsilon mt_\eta$.\\

Let $\bX$ be an arbitrary subset of $[N]$ of size $m$, and let $\bY=[N]\setminus\bX$.
In the remainder of the proof we construct an $\bX$-good, blue embedding $\phi\colon \QQ(\bX)^{t_\mu}\to\QQ([N])$ such that its image has a small volume.
After stating the complete construction we justify that the defined objects indeed exist.
\\

\subsection{ Construction of a blue embedding $\phi$  of $\QQ(\bX)^{t_\mu}$}

We shall find a blue $\phi(X)$ for each $X\in\QQ(\bX)^{t_\mu}$ layer-by-layer, such that $\phi$ is $\bX$-good, i.e., $\phi(X)\cap \bX= X$ for $X\subseteq \bX$. 
Consider pairwise disjoint subsets $\bY^{(0)}, \bY^{(1)}, \ldots, \bY^{(t_\mu)}$ of $\bY$, where $|\bY^{(0)}|=n$ and $|\bY^{(i)}|=2^{i-1}t_\eta$, for $i=1, \ldots, t_\mu$.
In our construction, we shall make sure that $\phi(X)\cap \bY \subseteq \bY^{(0)}\cup \cdots \cup \bY^{(|X|)}$, 
which will ensure that the volume of the embedded poset $\Vol({\phi}(\QQ(\bX)^{t_\mu})$ is at most $|X| + |\bY^{(0)}| + \cdots + |\bY^{(t_\mu)}|$.
\\

We shall use the notation $Y_X$ to denote $\phi(X)\cap\bY$, i.e., $\phi(X) = X\cup Y_X$. 
To guarantee that the defined function is an embedding, we only need to verify that for any $X_1\subseteq X_2$, we have $Y_{X_1} \subseteq Y_{X_2}$. Indeed, 
since $\phi(X)\cap \bX = X$, for incomparable $X_1$ and $X_2$ the images $\phi(X_1)$ and $\phi(X_2)$ are incomparable.
\\

\begin{figure}[h]
\centering
\includegraphics[scale=0.58]{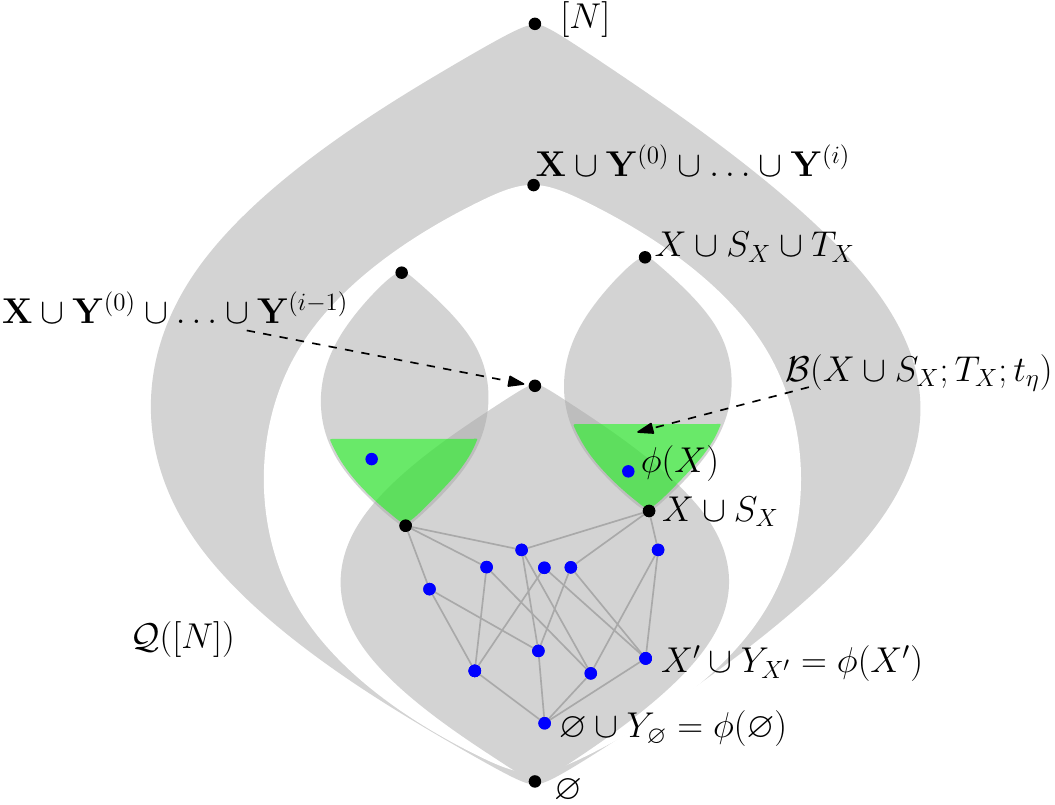}
\caption{The vertex $\phi(X)$ in $\cB(X\cup S_X;T_X;t_\eta)$ for $i=3$ and $|\bX|=4$}
\label{fig:QmQn}
\end{figure}

Let $\phi(\varnothing) $ be a blue vertex in $\cB(\varnothing; \bY^{(0)}; t_\eta)$, that exists by property $(\ast)$. 
For embedding the $i$-th layer, we define a blob whose variable set uses a set $\bY^{(i)}$ of new ground elements as well as ground elements of $\bY$ that were already used by the embedding of previous layers.
Let $i\geq 1$. Assume that we constructed $\phi(X)$ for every $X\subseteq \bX$, $|X|\leq i-1$, such that 
 $\phi(X) \cap \bY\subseteq \bY^{(0)} \cup  \cdots \cup \bY^{(i-1)}.$  
We shall construct $\phi(X)$ for each $X\subseteq \bX$, $|X|=i$, so let
\begin{eqnarray*}
S_X &= &  \bigcup_{X'\subset X}\phi(X') \cap  \bY= \bigcup_{X'\subset X} Y_{X'},\\
T_X&  \subseteq & (\bY^{(0)} \cup \cdots \cup  \bY^{(i)}) \setminus S_X,  ~~~|T_X|=n. 
\end{eqnarray*}

\noindent
Let $\phi(X)$ be a blue vertex in  $\cB(X\cup S_X;  T_X; t_\eta)$,  that exists by $(\ast)$. Let $Y_X=\phi(X)\cap\bY$.\\

We run this procedure for all $i\leq t_\mu$. 
For any two $X_1,X_2\in \QQ(\bX)^{t_\mu}$ with $X_1\subseteq X_2$, it follows from the construction that $Y_{X_1} \subseteq Y_{X_2}$. 
Thus indeed, we have an embedding of $\QQ(\bX)^{t_\mu}$.
\\

\subsection{Verification that $\phi$ is well defined and has small image volume}

\noindent
We need to make sure that the sets $\bY^{(i)}$ and $T_X$ exist, i.e., that the sets from which these are selected as subsets are large enough.  
To see that $\bY^{(i)}$ exists for $i\leq t_\mu$, it is sufficient to verify that $|\bY^{(0)} \cup \cdots \cup  \bY^{(t_\mu)}| \leq |\bY|$.
Note that for $i\geq  1$, 

\begin{equation}\label{eq:Y2}
|\bY^{(0)} \cup \cdots \cup  \bY^{(i)}| = n + \sum_{j=1}^{i} 2^{j-1} t_\eta = n+ (2^i-1) t_\eta.
\end{equation} Recalling that $t_\mu=(1-\varepsilon) \log m$ and $t_\eta=\tfrac{n}{m}t_\mu$, we have 
\begin{eqnarray}
|\bY^{(0)} \cup \cdots \cup  \bY^{(t_\mu)}|  & = & n+ (2^{t_\mu}-1) t_\eta   \nonumber \\
&\le& n+m^{1-\varepsilon}\frac{n}{m}t_\mu \nonumber \\
&=&  \left(\frac{1}{t_\mu} +m^{-\varepsilon}\right)nt_\mu.  \nonumber
\end{eqnarray}
We picked $\varepsilon$ such that $\tfrac{n+m}{n}\cdot \tfrac{1}{(1-\varepsilon)\log m}+m^{-\varepsilon}\le \varepsilon$,
which implies $\tfrac{1}{t_\mu} + \tfrac{m}{nt_\mu} +m^{-\varepsilon}\le \varepsilon$. Thus,

\begin{eqnarray}
|\bY^{(0)} \cup \cdots \cup  \bY^{(t_\mu)}|  & \leq&  \left(\varepsilon  - \frac{m}{nt_\mu}\right)nt_\mu \nonumber\\
&= & \varepsilon nt_\mu - m \nonumber\\
&= &N- n(m-t_\mu) -m   \label{eq:Y1}\\
&\le & N-m \nonumber\\
&=&  |\bY|  \label{eq:Y}.
\end{eqnarray}
It follows from (\ref{eq:Y}) that $\bY^{(i)}$, $i\leq t_\mu$, exists.
\\

Next, we shall show that $T_X$ exists for every $X\subseteq \bX$, $|X|=i$, with $i\in[t_\mu]$. 
For that, it is sufficient to verify that $|(\bY^{(0)} \cup \cdots \cup  \bY^{(i)}) \setminus \bigcup_{X'\subset X}  Y_{X'}|\ge n$.
Observe that in our construction $\phi(X)$ is chosen in a $t_\eta$-truncated blob, in which every vertex is larger than $S_X=\bigcup_{X'\subset X} Y_{X'}$.
Therefore, $|Y_X \setminus \bigcup_{ {X'\subset X }}  Y_{X'}|\leq  t_\eta$, i.e., we are introducing at most $t_\eta$ ``new'' elements from $\bY$ for $\phi(X)$, compared to the images of proper subsets of $X$. 
If $|X|=i$, then $X$ has $2^i-1$ proper subsets $X'$, and $\phi(X')$ uses as most $t_\eta$ ``new'' elements of $\bY$ compared to its own subsets, so we have that  
\begin{equation}\label{eq:Y3}
|S_X|=\bigg|\bigcup_{X'\subset X}  Y_{X'}\bigg|
= \bigg|\bigcup_{X'\subset X}  \bigg(Y_{X'}\setminus  \bigcup_{X''\subset X'} Y_{X''}\bigg)\bigg| \leq (2^i-1) t_\eta.
\end{equation}
Using (\ref{eq:Y2}) and (\ref{eq:Y3}) we have  
$$\bigg|  (\bY^{(0)} \cup \cdots \cup  \bY^{(i)}) \setminus \bigcup_{X'\subset X}  Y_{X'}\bigg| \geq  (n+ (2^i-1)t_\eta) -  (2^i-1) t_\eta =n, $$
so we can select a set $T_X$ of size $n$  from $(\bY^{(0)} \cup \cdots \cup  \bY^{(i)}) \setminus \bigcup_{X'\subset X}  Y_{X'}$.
\\

\subsection{Completion of the proof}

Finally, we consider the volume of $\phi(\QQ(\bX)^{t_\mu})$ and obtain the following bound using (\ref{eq:Y1}):
$$\Vol (\phi( \QQ(\bX)^{t_\mu}))\leq   |\bX| + |\bY^{(0)} \cup \cdots \cup \bY^{(t_\mu)}|  \leq m+  (N- n(m-t_\mu) -m)  = N-n(m-t_\mu).$$
We conclude that $\phi$ is a blue $\bX$-good embedding of $\QQ(\bX)^t_\mu$ with $\Vol(\phi( \QQ(\bX)^{t_\mu}))\leq N - (m-t_\mu)n$. 
Thus, by part (ii) of Lemma \ref{lem:truncatedCompletion} with $t=t_\mu$, we have a blue copy of $Q_m$ or a red copy of $Q_n$.
\end{proof}

\bigskip

\section{Diagonal Ramsey numbers for $D_n$ and $V_n$}\label{sec:DnVn}

\subsection{Diamonds}

Recall that the \textit{Sperner number} $\alpha(n)$ is the smallest $N$ such that $Q_N$ contains an antichain of size $n$,
and Sperner \cite{Sperner} showed that $\binom{\alpha(n)}{\lfloor\alpha(n)/2\rfloor}\ge n$.

\begin{proof}[Proof of Theorem \ref{thm:DnDn}]
For the lower bound, color the Boolean lattice $\QQ^1:=\QQ([2\alpha(n)-1])$ such that $Z\in\QQ^1$ is red if $|Z|<\alpha(n)$ and blue  if $|Z|\ge \alpha(n)$.
Assume that there is a red copy $\cD$ of $D_n$ in this coloring with maximal vertex $Y$. 
Then $|Y|<\alpha(n)$, so $\cD$ is contained in the sublattice $\{Z\in\QQ^1:~ Z \subseteq Y\}$. 
We know that $\cD$ contains an antichain on $n$ vertices, but by definition of $\alpha(n)$ there is no antichain on $n$ vertices in $\{Z\in\QQ^1:~ Z \subseteq Y\}$, a contradiction.
Similarly, we obtain that there is no blue copy of $D_n$.
\\

In order to bound $R(D_n,D_n)$ from above, let $N= \alpha(n)+\alpha(2n-1)$ and consider an arbitrary coloring of $\QQ^2:=\QQ([N])$.
Let $$\cS=\Big\{Z\in\QQ^2:~ |Z|=\lfloor\alpha(n)/2\rfloor\Big\} \text{ and } \cT=\Big\{Z\in\QQ^2:~ |Z|=N-\lfloor\alpha(n)/2\rfloor\Big\}.$$
We distinguish two cases.
\\

\textbf{Case 1:} At least one of $\cS\cup\{\varnothing\}$ or $\cT\cup\{[N]\}$ is not monochromatic.\\
Suppose $\cT\cup\{[N]\}$ is not monochromatic. Let $Y\in\cT$ such that $Y$ has a different color than the vertex $[N]$, see Figure \ref{fig:DnDn}.
Let $\cS'=\{Z\in\cS:~ Z\subseteq Y\}$. 

If $\cS'\cup\{\varnothing\}$ is monochromatic, then one of $Y$ or $[N]$ has the same color as $\cS'$.
Note that $|\cS'|= \binom{|Y|}{\lfloor\alpha(n)/2\rfloor}\ge \binom{\alpha(n)}{\lfloor\alpha(n)/2\rfloor}\ge n$, where the last inequality follows from Sperner's theorem.
This implies that the vertices $\cS'\cup\{\varnothing,Y, [N]\}$ contain a monochromatic copy of $D_n$.

If $\cS'\cup\{\varnothing\}$ is not monochromatic, then consider $X\in\cS'$ such that $X$ has a different color than the vertex $\varnothing$.
Note that $X\subset Y$.
The sublattice $\{Z\in\QQ^2:~ X\subseteq Z\subseteq Y\}$ has dimension $|Y|-|X|\ge N-\alpha(n)\ge \alpha(2n-1)$.
This implies that there is an antichain $\cA$ on $2n-1$ vertices, such that for every $Z\in\cA$, $X\subseteq Z\subseteq Y$. Note that $X,Y\notin\cA$.
In $\cA$ we find $n$ vertices with the same color, say without loss of generality red. Then these $n$ vertices together with the red vertex among $\varnothing$ and $X$ and the red vertex among $Y$ and $[N]$ form a red copy of $D_n$.
\\

\begin{figure}[h]
\centering
\includegraphics[scale=0.58]{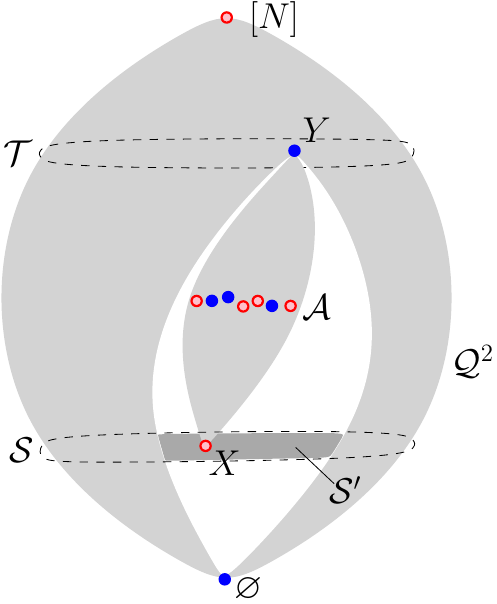}
\caption{Setting in Case 1 if $\cS'\cup\{\varnothing\}$ is not monochromatic}
\label{fig:DnDn}
\end{figure}

\textbf{Case 2:} Both $\cS\cup\{\varnothing\}$ and $\cT\cup\{[N]\}$ are monochromatic.\\
If $\cS\cup\{\varnothing\}$ and $\cT\cup\{[N]\}$ have the same color, then $\cS\cup\{\varnothing, [N]\}$ contains a monochromatic copy of $D_n$ because 
$|\cS|=\binom{N}{\lfloor\alpha(n)/2\rfloor}\ge \binom{\alpha(n)}{\lfloor\alpha(n)/2\rfloor}\ge n$.
So suppose that $\cS\cup\{\varnothing\}$ and $\cT\cup\{[N]\}$ have distinct colors, say $\cS\cup\{\varnothing\}$ is red and $\cT\cup\{[N]\}$ is blue.
Fix the vertex $X=[\alpha(n)]\in\QQ^2$.
If $X$ has the same color as $\cS\cup\{\varnothing\}$, i.e.\ red, then let $\cS''=\{Z\in\cS:~ Z\subseteq X\}$.
Note that $|\cS''|\ge \binom{|X|}{\lfloor\alpha(n)/2\rfloor}\ge n$. Then $\cS''\cup\{\varnothing,X\}$ contains a red copy of $D_n$.
If $X$ has the same color as $\cT\cup\{[N]\}$, we find a monochromatic copy of $D_n$ by a similar argument.
\end{proof}

\subsection{Forks}

Let $n,N\in\N$ such that $N\ge \alpha(n)$.
Recall that
$$\beta(N,n)=\min\big\{\beta\in\N: ~ \binom{N}{\beta}\ge n\big\}\ \text{ and }\ N^*(n)=\max\big\{N\ge\alpha(n): ~ N-\beta(N,n)< \alpha(n)\big\},$$
as illustrated in Figure \ref{fig:Nstar}.
Both $\beta(N,n)$ and $N^*(n)$ are well-defined.

\begin{figure}[h]
\centering
\includegraphics[scale=0.58]{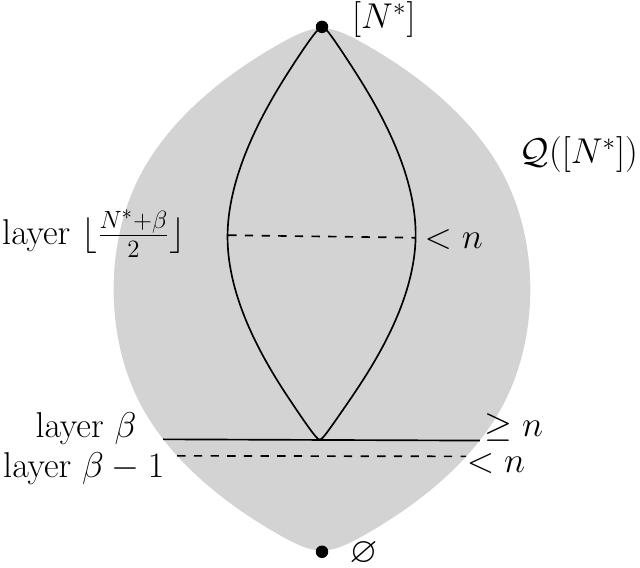}
\caption{Setting for $N^*$ and $\beta=\beta(N^*,n)$}
\label{fig:Nstar}
\end{figure}

\begin{proof}[Proof of Theorem \ref{thm:VnVn}]
Let $N^*=N^*(n)$. First, we show the lower bound $R(V_n,V_n)>N^*$.
We construct a coloring of $\QQ^1:=\QQ([N^*])$ which contains no monochromatic $V_n$.
Color each vertex $Z$ with $|Z|<\beta(N^*,n)$ in red and all remaining vertices in blue.
Then there is no red antichain of size $n$, so in particular there is no red copy of $V_n$.
Assume towards a contradiction that there is a blue copy of $V_n$ with minimal vertex $X$.
Note that $|X|\ge \beta(N^*,n)$, so the sublattice $\{Z\in\QQ^1:~X\subseteq Z \subseteq [N^*]\}$ has dimension at most $N^*-\beta(N^*,n)< \alpha(n)$,
thus there is no blue antichain of size $n$, and in particular no blue copy of $V_n$, a contradiction.
\\

For the upper bound, we define $N_{+}$ to be the smallest integer such that $N_{+}-\beta(N_{+},n)\ge \alpha(2n-1)$.
In order to show that $R(V_n,V_n)\le N_{+}$, we consider an arbitrary coloring of $\QQ^2:=\QQ([N_{+}])$. 
We shall find a monochromatic copy of $V_n$.
Without loss of generality, the vertex $\varnothing$ is red.
We know that layer $\beta_+:=\beta(N_{+},n)$ contains at least $n$ vertices.
If layer $\beta_+$ is red, we find a red copy of $V_n$, so suppose that there exists a blue vertex $X$ with $|X|=\beta_+$.
Then the sublattice $\QQ^3:=\{Z\in\QQ^2:~X\subseteq Z\subseteq [N_{+}]\}$ has dimension $N_{+}-\beta_+\ge \alpha(2n-1)$. 
Thus we find an antichain $\cA$ of size $2n-1$ in $\QQ^3$. Note that $X\notin \cA$.
Each vertex in $\cA$ is either red or blue, so there are $n$ vertices of the same color in this antichain, which together with one of $X$ or $\varnothing$ form a monochromatic copy of $V_n$ as desired. Therefore, $R(V_n,V_n)\le N_+$.
\\

We claim that $N_+\le N^*+3$. To prove this, we first show that $\beta(N^*+1,n)\ge \beta(N^*+3,n)$.
Recall that $N^*\ge \alpha(n)$, so $\beta(N^*+1,n) \le \lfloor\alpha(n)/2\rfloor\le N^*/2$.
This implies that  
$$
\binom{N^*+3}{\beta(N^*+1,n)}\ge \binom{N^*+1}{\beta(N^*+1,n)}\ge n,
$$ 
thus indeed $\beta(N^*+1,n)\ge\beta(N^*+3,n)$.
By definition of $N^*(n)$ we know that $(N^*+1)-\beta(N^*+1,n)\ge \alpha(n)$, so
$$(N^*+3)-\beta(N^*+3,n)\ge (N^*+1)+2-\beta(N^*+1,n)\ge \alpha(n)+2\ge \alpha(2n-1).$$
Recall that $N_+$ is minimal such that $N_{+}-\beta(N_{+},n)\ge \alpha(2n-1)$, so $N_+\le N^*+3$.
This concludes the proof of the upper bound. 
We remark that if $\alpha(n)+1\ge \alpha(2n-1)$, a similar argument provides $N_+\le N^*+2$.
\\

In the Appendix we show that $N^*(n)= (d+o(1))\log n$, where $d=\frac{1}{1-c}$ and  $c$ is the unique real solution of $\log\big(c^{-c}(1-c)^{c-1}\big)=1-c$, i.e.\ $d\approx 1.29$.

\end{proof}

\bigskip

\section{Upper bound on Ramsey number   $\Rw (Q_n,Q_n)$}\label{sec:weakQnQn}

\begin{proof} [Proof of  Theorem \ref{thm:weakQnQn}]
Consider an arbitrary coloring of the Boolean lattice $\QQ([N])$ where $N=0.96n^2$.
Our goal is to find a monochromatic weak copy of $Q_n$. While an induced copy of $Q_n$ has a rigid structure, there are many non-isomorphic weak copies of $Q_n$. 
In $\QQ([N])$ we shall find a member of a class $\cP(n,s, t)$ of special weak copies of $Q_n$.

\subsection{Definition of $\cP(n,s,t)$}
 In this section we write $P'<P''$ for posets $P'$ and $P''$ if any element of a poset $P'$ is less than any element of a poset $P''$.  
 We define the class $\cP(n,s,t)$ of posets, see Figure \ref{fig:QnQn_weak} (a),  such that each member of this class is of the form
  $$P_0 \cup  \dots \cup P_{s-1}\cup  Q_s^t \cup  P'_{t+1} \cup \dots \cup P'_n,$$
 where 
 \begin{itemize}
 \item{} $P_i$ is an arbitrary poset with $|P_i|=\binom{n}{i}$, $i\in \{0, \ldots, s-1\}$,
 \item{} $P'_j$ is an arbitrary poset with $|P'_j|=\binom{n}{n-j}=\binom{n}{j}$,  $j\in \{t+1, \ldots, n\}$, 
 \item{} $Q_s^t$ is an (induced) copy of an $(s,t)$-truncated $Q_n$, i.e., consists of layers $s,\dots, t$ of $Q_n$,
 \item{} $P_0< P_1< \cdots < P_{s-1} < Q_s^t < P'_{t+1} < \cdots <P'_n$. \end{itemize}
 
Here, if $s=0$ or $t=n$, we use the convention that $P_0 \cup  \cdots \cup P_{s-1}=\varnothing$ or $P'_{t+1} \cup \cdots \cup P'_n=\varnothing$, respectively.
Observe that every member of $\cP(n,s,t)$ is indeed a weak copy of $Q_n$, where layer $i$ of $Q_n$ corresponds to $P_i$, for $i\in \{0, \ldots, s-1\}$, 
layer $j$ of $Q_n$ corresponds to $P'_{j}$, for $j\in \{t+1, \ldots, n\}$ and the remaining layers are contained in the \textit{middle part} $Q_s^t$.


\begin{figure}[h]
\centering
\includegraphics[scale=0.58]{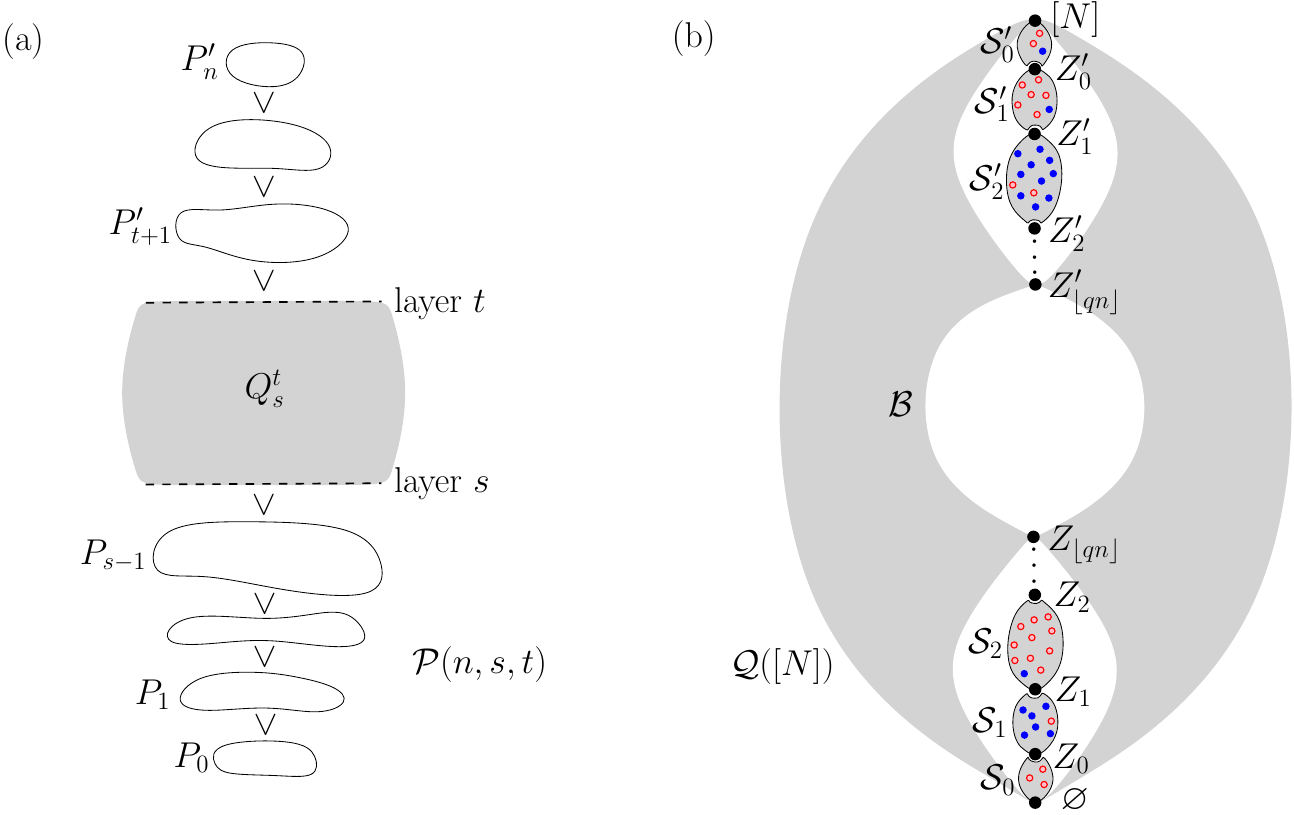}
\caption{(a) A $\cP(n,s,t)$ for $s=4$ and $t=n-3$, (b) Sausage chain in $\QQ([N])$}
\label{fig:QnQn_weak}
\end{figure}

\subsection{Construction of a sausage chain in $\QQ([N])$}
Let $N=0.96n^2$.
For our proof we need to define a constant $q$ that satisfies certain properties. For  $0<p<1$, the \textit{binomial entropy function} is defined as  
$H(p)=-\big(p\log p + (1-p)\log(1-p)\big),$ where $\log$ is base $2$. 
Let $q$ be the real number, $0<q<1/2$, which minimizes $(1-q) + 2\int_{0}^{q} H(s) ds$.  It can be verified analytically or numerically that such a $q$ satisfies $H(q)=1/2$, i.e., $0.11<q<0.111$, and $\int_{0}^{q} H(s) ds\le 0.033$.
In particular, for sufficiently large $n$ we have that  
\begin{equation}\label{eq:minimize}
(1-q)n^2 + 2n^2 \int_{0}^{q} H(s)  \operatorname{d}\!s \le 0.956n^2 \le N-\varepsilon n^2,
\end{equation}
for some constant $\varepsilon>0$.

Next we define a {\it sausage chain} in $\QQ([N])$, see Figure \ref{fig:QnQn_weak} (b). 
Let $$Z_0\subset Z_1 \subset \dots \subset Z_{\qn} \subset Z'_{\qn}\subset Z'_{\qn-1}\subset \dots \subset Z'_0$$ be vertices in $\QQ([N])$ such that for $0\leq i\leq \qn$,
$$|Z_i|=\sum_{j=0}^{i} \left(\left\lceil\log\binom{n}{j}\right\rceil+1\right) \quad\text{ and }\quad |Z'_i|=N-\sum_{j=0}^{i} \left(\left\lceil\log\binom{n}{j}\right\rceil+1\right).$$
We will argue later that $|Z_{\qn}|\le|Z'_{\qn}|$, which implies that these vertices are well-defined.
We define subposets $\cS_i$ and $\cS'_i$, which we refer to as \textit{sausages}. Let $\cS_0=\{X\in\QQ([N])\colon X\subset Z_0\}$, and
$$\cS_i=\{X\in\QQ([N])\colon Z_{i-1}\subseteq X \subset Z_i\}.$$
Similarly, let \textit{sausages} $\cS'_0=\{X\in\QQ([N])\colon Z'_0\subset X\}$, and 
$$\cS'_i=\{X\in\QQ([N])\colon Z'_{i}\subset X \subseteq Z'_{i-1}\}.$$
Moreover, let 
$$\cB=\{X\in\QQ([N])\colon Z_{\qn}\subseteq X \subseteq Z'_{\qn}\}.$$
The subposet $\cB$ is isomorphic to a Boolean lattice of dimension $|Z'_{\qn}|-|Z_{\qn}|$.
Note that $$\cS_0<\cS_1<\dots<\cS_{\qn}<\cB<\cS'_{\qn}<\dots<\cS'_0.$$
The subposet $\cS_0\cup\dots \cS_{\qn}\cup\cB\cup\cS'_{\qn}\cup\dots\cup\cS'_0$ of $\QQ([N])$ is referred to as \textit{sausage chain}.
Note that the sausage chain is well-defined if all vertices $Z_i,Z'_i$, $i\in\{0,\dots,\qn\}$, exist. 
Note also that $|\cS_0|<\cdots < |\cS_{\qn}|$ and $|\cS'_{\qn}|>\cdots >|\cS_0|$.

\subsection{Finding a member of $\cP(n,s,t)$ in the sausage chain}
We shall find a monochromatic member of $\cP(n,s,t)$ for some $s$ and $t$ depending on the coloring, such that the middle part $Q_s^t$ of $\cP(n,s,t)$ is embedded into $\cB$, each $P_i$ is embedded into its own $\cS_\ell$, and each $P'_i$ is embedded in its own  $\cS'_{\ell'}$ for some $\ell$, $\ell'$.
\\

Assume, without loss of generality, that among all sausages $\cS_0, \ldots, \cS_{\qn}, \cS'_0, \ldots, \cS'_{\qn}$, most sausages have majority color red,
then at least $\QN$ sausages have this majority color.
Assume further, that there are $s$ sausages among  $\cS_0, \ldots, \cS_{\qn}$ with majority color red,
which we denote by $\cS_{i_0},\dots,\cS_{i_{s-1}}$, $i_0<\dots<i_{s-1}$. Note that possibly $s=0$.
Since $i_0 \ge 0$, we see that $i_1\ge 1$, and similarly for any $j\in\{0,\dots,s-1\}$, $i_j\ge j$.
For $j\in\{0, \ldots, s-1\}$, choose $P_j$ arbitrarily such that $P_j\subseteq \cS_{i_j}$,  $|P_j|=\binom{n}{j}$, and $P_j$ is red.
These subposets are well-defined since for any $i\in\{0,\dots,\qn\}$, $|\cS_i|=2^{|Z_i|-|Z_{i-1}|}-1=2^{\lceil\log\binom{n}{i}\rceil+1}-1\ge 2\binom{n}{i}-1,$
so in particular there are at least $ \binom{n}{i_j}\geq \binom{n}{j}$ distinct red vertices in $\cS_{i_j}$.
\\

Similarly, we find $\QN-s$ sausages among $\cS'_0, \ldots, \cS'_{\qn}$ with majority color red, denoted by $\cS'_{i'_0},\dots,\cS'_{i'_{\QN-s-1}}$, $i'_0<\dots<i'_{\QN-s-1}$. Here, it is possible that $\QN-s=0$.
For $j\in\{0,\dots,\QN-s-1\}$, choose $P'_{n-j}$  arbitrarily, such that $P'_{n-j}\subseteq \cS'_{i'_j}$, $|P'_{n-j}|=\binom{n}{n-j}$, and $P'_{n-j}$ is red.
With a similar argument as before, we see that there are indeed at least $\binom{n}{n-j}$ distinct red vertices in $\cS'_{i'_j}$.
\\

So far we selected $P_j$ for $j\in\{0, \ldots, s-1\}$ and $P'_j$ for $j\in\{t+1,\dots,n\}$, where $t=n-\QN+s$.
It remains to verify that $Q_s^t$ is contained in $\cB$. For that we shall show that the dimension of $\cB$ is large enough to apply Lemma \ref{lem:truncatedCompletion} (iv).  
Recall that $H(p)=-\big(p\log p + (1-p)\log(1-p)\big)$.
Stirling's formula implies that $\log\binom{N}{pN}=\big(1+o(1)\big)H(p)N$ for any positive integer $N$ and $0<p<1$. Thus
\begin{align}
{\rm dim} (\cB) & =  |Z'_{\qn}|-|Z_{\qn}|\nonumber\\
&=N-2\sum_{i=0}^{\qn} \left(\left\lceil\log\binom{n}{i}\right\rceil+1\right)\nonumber\\
&\ge N-4n-2\sum_{i=1}^{\qn} \log\binom{n}{i}\nonumber\\
&\ge N-4n-\big(2+o(1)\big)n\sum_{i=1}^{\qn} H\left(\frac{i}{n}\right). \label{eq:H_in}
\end{align}

Since $H$ is an increasing and continuous function on the interval $(0,1/2)$ and is bounded by $1$, we have
$$
\sum_{i=1}^{\qn} H\left(\frac{i}{n}\right)\le \int_{1}^{qn+1} H\left(\frac{t}{n}\right)\,\operatorname{d}\!t=\int_{1/n}^{q+1/n}H(s)n\,\operatorname{d}\!s\le n \int_{0}^{q}H(s)\,\operatorname{d}\!s + 1,
$$
Thus, using (\ref{eq:H_in}) and recalling the bound on $N$ from (\ref{eq:minimize}), we have 
\begin{eqnarray*}
{\rm dim} (\cB) &\ge &  N- 4n - \big(2+o(1)\big) \left(n^2\int_{0}^{q} H(s) \operatorname{d}\!s +n\right)\\
& \ge & N-2n^2\int_{0}^{q} H(s) \operatorname{d}\!s -o(n^2)\\
& \ge & \left[ (1-q)n^2 + 2n^2 \int_{0}^{q} H(s)  \operatorname{d}\!s +\varepsilon n^2\right] -2n^2\int_{0}^{q} H(s) \operatorname{d}\!s -o(n^2)\\
& \geq & (1-q)n^2+2n = (n-qn+2)n.
\end{eqnarray*}
In particular, $|Z'_{\qn}|-|Z_{\qn}|\ge 0$, which implies that $Z_{i}$ and $Z'_{i}$, $i\in\{0,\dots,\qn\}$, are well-defined.
Since ${\rm dim} (\cB) \ge  (n-qn+2)n \ge (t-s+2)n$ for $t=n-\QN+s$, 
Lemma \ref{lem:truncatedCompletion} (iv) provides that $\cB$ contains either a blue (induced) copy of $Q_n$ and we are done, or a red $Q_s^t$.
If there is a red $Q_s^t$ in $\cB$, we conclude that the sausage chain contains a red member of $\cP(n, s, t)$, and thus a red weak copy of $Q_n$.
\end{proof}

\section{Concluding remarks}
In this paper, we first studied induced poset Ramsey numbers and 
showed that for $n\ge m$ and $0<\varepsilon<1$ with $\tfrac{n+m}{n}\cdot \tfrac{1}{(1-\varepsilon)\log m}+m^{-\varepsilon}\le \varepsilon$, 
$$R(Q_m,Q_n)\le n\left(m-(1-\varepsilon)^2\log m\right).$$
When applying this result for specific $\varepsilon$, there is a trade-off between the best Ramsey bound and the smallest value of $m$ for which the bound holds.
Our main result claims that $R(Q_m,Q_n)\le n\left(m-\big(1-\tfrac{2}{\sqrt{\log m}}\big)\log m\right)$ for $2^{25}\le m\le n$.
In addition, for $1024\le m\le n$ or $32\le m\le \frac{n}{32}$, one could obtain the upper bound $R(Q_m,Q_n)\le n\left(m-\tfrac14\log m\right)$, using $\varepsilon=\frac12$. 
\\

Theorem \ref{thm:QmQn} is an improvement of the basic upper bound, Lemma \ref{lem:blob_restated}, by a superlinear additive term and a step towards the following conjecture raised by Lu and Thompson \cite{LT}.

\begin{conjecture}[\cite{LT}]\label{conj:LT}
For $n\ge m$ and sufficiently large $m$, $R(Q_m,Q_n)=o(n^2)$.
\end{conjecture}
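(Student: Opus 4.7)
The plan is necessarily speculative since the conjecture is open and widely believed to be hard. The fundamental obstacle in Theorem~\ref{thm:QmQn} is that the truncation level $t_\mu$ is bounded by $(1-o(1))\log m$: each new layer of the embedding of $\QQ(\bX)^{t_\mu}$ requires a blob of dimension $n$ on fresh ground elements, and the cumulative size of the ground-element layers $\bY^{(0)}, \ldots, \bY^{(t_\mu)}$ doubles per layer. Any route to $o(n^2)$ must achieve truncation $t_\mu = \omega(\log m)$, and ideally $t_\mu = \Theta(m)$, which would yield savings of $\Theta(nm) = \Theta(n^2)$ ground elements.

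My proposal is to move to a \emph{bundled} embedding. Rather than selecting a single blue vertex $\phi(X)$ in each truncated blob $\cB(X \cup S_X; T_X; t_\eta)$, one would try to prove that there must be a blue \emph{sub-blob} $\cB(\phi(X); T'_X)$ of some positive dimension $d$ inside it (possibly after refining $T_X$). Such a sub-blob effectively embeds $d$ additional layers of $Q_m$ for the price of only $d$ new ground elements, circumventing the doubling bottleneck. Iterating this construction should, in principle, push the effective truncation from $\log m$ through a sequence of amplified savings toward $\Theta(m)$, at which point the total saving becomes $\Theta(n^2)$ and the bound falls below $n^2$ by a constant factor; iterating once more via the same density argument, one could hope to drive this quantity to $o(n^2)$.

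The main obstacles are twofold. First, one needs a density-type lemma guaranteeing the existence of a blue sub-blob of dimension $d = \omega(1)$ inside every sufficiently large truncated blob in which no blue induced $Q_m$ and no red induced $Q_n$ appears; this is essentially a \emph{local} Ramsey statement for truncated Boolean lattices and appears to be non-trivial in its own right. Second, the sub-blobs chosen at different $X \in \QQ(\bX)^{t_\mu}$ must be mutually consistent: their variable sets $T'_X$ must be disjoint from, or interact predictably with, the variable sets $T_{X'}$ chosen at comparable $X'$, so that the full map $\phi$ remains a strong embedding. Handling this compatibility likely requires a carefully layered partition of the ambient ground set, paired with a recursive application of the Truncated Blob Lemma. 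Given that even the weak analogue $\Rw(Q_n,Q_n)$ is only known to be $\leq 0.96 n^2$, and the induced problem is strictly harder, it is likely that Conjecture~\ref{conj:LT} will require genuinely new structural insights beyond any refinement of the blob framework considered here.
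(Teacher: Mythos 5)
This statement is an open conjecture (due to Lu and Thompson), and the paper does not prove it --- Theorem~\ref{thm:QmQn} only improves the trivial bound $nm$ by an additive term of order $n\log m$, which is very far from $o(n^2)$. Your submission is, by your own admission, a speculative research plan rather than a proof, so it cannot be accepted as one. The two ingredients you flag as ``obstacles'' are precisely the content that would constitute a proof: (1) a density-type lemma producing a blue sub-blob of dimension $\omega(1)$ inside every large truncated blob that avoids both monochromatic copies, and (2) a scheme making the sub-blobs chosen at different $X$ compatible. Neither is proved, nor is it argued that the first is even plausible in the required quantitative range. Moreover, the bookkeeping at the end is not right: even if one could push the truncation level to $t_\mu=\Theta(m)$, the resulting bound is $n(m-t_\mu)$ plus the volume of the auxiliary sets, i.e.\ at best $cnm$ for a constant $c<1$; to reach $o(n^2)$ for $m=\Theta(n)$ one needs $t_\mu=(1-o(1))m$ \emph{and} total auxiliary volume $o(nm)$, and ``iterating once more via the same density argument'' is not an argument for either.

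There is also a structural problem with the bundling idea itself. A blue sub-blob $\cB(\phi(X);T'_X)$ has its variable set inside $\bY$, so every vertex of it meets $\bX$ in exactly $X$. If you use its vertices as images of proper supersets $X''\supsetneq X$, the embedding is no longer $\bX$-good, and $\bX$-goodness is exactly what makes incomparability automatic in the blob framework (Lemma~\ref{lem:truncatedCompletion}): for incomparable $X_1,X_2\subseteq\bX$ the images are incomparable because their traces on $\bX$ are. Once images of different subsets of $\bX$ are distinguished only by their $\bY$-parts, you must separately guarantee non-containment between images placed in sub-blobs over different base vertices, and the proposal offers no mechanism for this beyond the vague requirement that variable sets ``interact predictably.'' So even granting the unproven density lemma, the construction as described does not yield a strong embedding of $Q_m$. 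In short: the bottleneck diagnosis (the doubling of the $\bY^{(i)}$ caps $t_\mu$ at roughly $\log m$) is accurate and consistent with the paper's discussion, but what you have written is an outline of difficulties, not a proof, and the conjecture remains open.
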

\noindent We propose a stronger conjecture:
\begin{conjecture}\label{conj:LT}
For any $\varepsilon>0$, there is a large enough $m_0$ such that for any two $m,n\in\N$ with $n\ge m \ge m_0$,
$$R(Q_m,Q_n)\le n\cdot m^{\varepsilon}.$$
\end{conjecture}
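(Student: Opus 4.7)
The conjecture $R(Q_m,Q_n) \le n \cdot m^{\varepsilon}$ is a major strengthening of Theorem \ref{thm:QmQn}, which saves only an additive $(1-o(1))n\log m$ term, and it appears to require substantially new ideas beyond the truncated-blob framework in its current form. My plan is to attempt a multi-scale recursive refinement of the approach from Section \ref{sec:QmQn}, using Theorem \ref{thm:QmQn} itself as the base case of an induction on $m$.

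Concretely, I would proceed by induction on $m$ and propagate a strengthened inductive statement carrying both a Ramsey bound and a volume guarantee: any blue/red coloring of a Boolean lattice of dimension $N'$ contains a monochromatic $\bX$-good copy of $Q_{m'}$ whose vertices use at most $V(m',n',N')$ ground elements. Given such a hypothesis, I would replicate the layer-by-layer embedding of Theorem \ref{thm:QmQn}, but at each layer, instead of selecting merely a single blue vertex in a blob $\cB(X\cup S_X;\, T_X;\, t_\eta)$, I would invoke the inductive hypothesis inside a lower-dimensional truncated blob to produce a blue structured subposet playing the role of $\phi(X)$. If this succeeds across all layers, the truncation parameter $t_\mu$ could be pushed from $\log m$ up to roughly $m^{1-\varepsilon'}$, compressing the total volume from $\Theta(nm)$ down to $n \cdot m^{\varepsilon}$. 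To close the recursion cleanly, the $\bX$-goodness property should be generalized to a laminar family of designated ground sets, tracking which elements are ``reserved'' for which layers of the outer induction.

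As a complementary ingredient I would pursue a structural dichotomy in the spirit of Sections \ref{sec:DnVn} and \ref{sec:weakQnQn}. If many layers of the host $\QQ([N])$ are nearly monochromatic in a single color, a direct chain-and-antichain argument (as in the proof of Theorem \ref{thm:VnVn}) should yield a large monochromatic Boolean lattice. Otherwise the coloring is ``spread out'' on many layers, and one might hope to extract a quasirandom sublattice in which an averaging or entropy-style argument, analogous to the sausage-chain construction in the proof of Theorem \ref{thm:weakQnQn}, produces a blue $Q_m$ in the minority color.

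The main obstacle is the inherent $n$-per-layer cost of the truncated-blob framework: the variable set $T_X$ of each blob must have dimension at least $n$ to certify that $\phi(X)$ is incomparable with $\phi(X')$ for antichain-related $X'$, forcing a $\Theta(n)$ volume contribution per embedded layer. Driving the total volume down to $n \cdot m^{\varepsilon}$ would require distinct $X$'s in the same layer to share ground elements in a controlled way, which directly jeopardizes the induced-embedding property; decoupling incomparability from ground-set disjointness is the fundamental technical challenge. Since even the weaker conjecture of Lu and Thompson that $R(Q_m,Q_n) = o(n^2)$ remains open, any honest proof of the stated bound will almost certainly require a genuinely new construction, perhaps based on random, algebraic, or entropy-theoretic coloring arguments that are not currently available in this area.
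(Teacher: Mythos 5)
The statement you were asked about is not a theorem of the paper but a conjecture the authors themselves pose (a strengthening of the Lu--Thompson conjecture that $R(Q_m,Q_n)=o(n^2)$, which is also open); the paper contains no proof of it, so there is nothing to compare your argument against. More importantly, your submission is not a proof: it is a research plan, and by your own admission its central step is unresolved. The proposed induction is never carried out — the volume function $V(m',n',N')$ is never defined, the strengthened inductive statement is never verified in a base case beyond citing Theorem \ref{thm:QmQn} (which gives only an additive $n\log m$ saving, nowhere near a multiplicative $m^{\varepsilon}$ bound), and the key compositional claim, that monochromatic structured subposets produced inside lower-dimensional truncated blobs can be stitched together into an \emph{induced} copy of $Q_m$ rather than merely a weak copy, is exactly where the construction would have to do real work and is left entirely unaddressed.

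You correctly identify the fundamental obstacle yourself: in the truncated-blob framework each embedded vertex needs a variable set of size $n$ (or at least a private reservoir of ground elements) to certify incomparability, so embedding $t$ layers costs volume $\Theta(nt)$; pushing $t$ up to $m^{1-\varepsilon'}$ therefore blows past the budget $n\cdot m^{\varepsilon}$ rather than meeting it, unless vertices in the same layer share ground elements in a way that the induced-embedding requirement currently forbids. The complementary ``dichotomy'' paragraph (nearly monochromatic layers versus a quasirandom sublattice) is likewise only a heuristic, with no volume count or extraction lemma even sketched. In short, there is a genuine and acknowledged gap — indeed the whole argument — and the statement should be treated as what the paper says it is: an open conjecture.
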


\noindent We remark that for small $m$, the authors conjectured the following in \cite{QnV}.
\begin{conjecture}[\cite{QnV}]
For fixed $m\in\N$ and large $n\in\N$ , $R(Q_m,Q_n)=n+o(n)$.
\end{conjecture}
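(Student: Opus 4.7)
The lower bound $R(Q_m,Q_n)\ge n$ is trivial, since colouring $\QQ([n-1])$ entirely red leaves no room for a red $Q_n$ and contains no blue vertex at all. The substantive content is the upper bound, and my plan is to proceed by induction on $m$, taking as the base case the bound $R(Q_2,Q_n)\le n+O(n/\log n)$ of Gr\'osz, Methuku and Tompkins.

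For the inductive step, fix $m\ge 3$, set $N=n+f(n)$ with $f(n)=o(n)$ a function to be chosen, and consider an arbitrary blue/red coloring of $\QQ([N])$. Partition $[N]=\bX\cup\bY$ with $|\bY|$ a small fraction of $f(n)$, and for each $X\in\QQ(\bX)$ consider the blob $\cB(X;\bY)$. Apply the inductive hypothesis to a suitable sublattice of $\QQ(\bX)$ of dimension just above $n-1+f_{m-1}(n)$; this either directly produces a red $Q_n$ (and we are done) or forces many blue copies of $Q_{m-1}$ throughout $\QQ(\bX)$. The next step is a \emph{lifting} argument: for each such blue $Q_{m-1}$, search in the attached blobs $\cB(X;\bY)$ above and below it for a blue vertex that is strictly comparable to every vertex of the $Q_{m-1}$, thereby extending it to a blue $Q_m$. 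A pigeonhole argument comparing the count of blue $Q_{m-1}$'s against the number of possible extensions inside the $\bY$-blobs should show that with $|\bY|$ a sufficiently slow-growing function of $n$, at least one such lifting succeeds.

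\textbf{Main obstacle.} I expect the lifting step to be the core difficulty. Extending a blue $Q_{m-1}$ to a blue $Q_m$ requires an extra blue vertex comparable to \emph{all} vertices of the $Q_{m-1}$ and strictly outside it --- a structurally much more rigid demand than the $m=2$ case, where a single comparable vertex suffices. A naive averaging argument will almost certainly fall short; what appears to be required is a structural dichotomy, analogous to the chain-saturation phenomenon underlying the Gr\'osz--Methuku--Tompkins proof, that converts the absence of a blue $Q_m$ into a strong monochromatic-red skeleton uniformly across all the blobs $\cB(X;\bY)$ simultaneously. A natural intermediate milestone would be to prove $R(Q_m,Q_n)\le n+n^{1-\delta_m}$ for some $\delta_m>0$ and each fixed $m\ge 3$; even this weaker statement appears to be open, and would be the right springboard for optimising down to the conjectured $n+o(n)$.
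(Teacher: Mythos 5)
This statement is an open conjecture in the paper (stated in the concluding remarks, attributed to \cite{QnV}); the paper contains no proof of it, and your proposal does not close it either --- indeed you concede at the end that even the weaker intermediate bound $R(Q_m,Q_n)\le n+n^{1-\delta_m}$ ``appears to be open.'' So what you have written is a research plan with an acknowledged hole, not a proof, and the hole is exactly where the whole difficulty lives.

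Beyond being incomplete, the lifting step as you describe it is structurally wrong. A blue vertex comparable to every vertex of a blue copy of $Q_{m-1}$ extends that copy to $Q_{m-1}$ plus an apex (or a bottom), which is a poset on $2^{m-1}+1$ vertices and is not $Q_m$; to pass from $Q_{m-1}$ to $Q_m$ you must add $2^{m-1}$ new blue vertices forming a second induced copy of $Q_{m-1}$ positioned consistently above (elementwise) the first, and in the induced setting you must additionally forbid unwanted comparabilities. The same objection applies to your description of the $m=2$ case: a $2$-chain plus one comparable vertex is not $Q_2$, and the Gr\'osz--Methuku--Tompkins argument is not a one-vertex lifting but a bespoke chain/saturation argument. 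There is also a quantitative problem with ``many blue copies of $Q_{m-1}$ throughout $\QQ(\bX)$'': each application of the inductive hypothesis needs a sublattice of dimension at least $n+f_{m-1}(n)$ inside a host of dimension $n+f(n)$, so every such sublattice occupies essentially the whole ground set and you cannot generate a large, well-spread supply of blue $Q_{m-1}$'s to feed a pigeonhole count against the blobs $\cB(X;\bY)$. Note that the blob-with-induction template you propose is essentially the mechanism behind Lemma \ref{lem:blob_restated} and Theorem \ref{thm:QmQn}, and even with the refinements of Section \ref{sec:QmQn} it only yields bounds of the form $n\bigl(m-(1-o(1))\log m\bigr)$; for fixed $m\ge 3$ nothing close to $n+o(n)$ is known, which is precisely why the paper states this as a conjecture.
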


In the last part of this paper we discussed weak poset Ramsey numbers and improved the previously known upper bound $\Rw(Q_n,Q_n)\le R(Q_n,Q_n)\le n^2-o(n^2)$ to $\Rw(Q_n,Q_n)\le 0.96n^2$. 
It is still open whether or not the weak poset Ramsey number is significantly smaller than the (induced) poset Ramsey number.
The second author suggests the following conjecture.
\begin{conjecture}
For any $n\in\N$,
$R(Q_n,Q_n)-\Theta(n)\le\Rw(Q_n,Q_n)\le R(Q_n,Q_n)$.
\end{conjecture}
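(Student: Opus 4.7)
The upper bound $\Rw(Q_n,Q_n) \le R(Q_n,Q_n)$ is immediate, since every strong embedding is in particular a weak embedding, and so a monochromatic induced copy of $Q_n$ in $\QQ([R(Q_n,Q_n)])$ is already a monochromatic weak copy.

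For the lower bound $R(Q_n,Q_n) \le \Rw(Q_n,Q_n) + O(n)$, I would attempt the following padding strategy. Set $N_0 = \Rw(Q_n,Q_n)$ and $N = N_0 + Cn$, where $C$ is an absolute constant to be fixed. Given a $2$-coloring $c$ of $\QQ([N])$, partition $[N] = \bX \sqcup \bY$ with $|\bX| = N_0$ and $|\bY| = Cn$. The key structural observation is that if $\psi \colon Q_n \to \QQ(\bX)$ is any weak embedding and $\iota \colon Q_n \to \QQ(\bY)$ is any strong embedding, then the map $X \mapsto \psi(X) \cup \iota(X)$ is automatically a strong embedding of $Q_n$ into $\QQ([N])$: comparabilities are preserved by both $\psi$ and $\iota$, while incomparabilities in $Q_n$ are reflected in the image because $\iota$ is strong and the images of $\psi$ and $\iota$ live in the disjoint halves $\bX$ and $\bY$ of the ground set. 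Thus it suffices to produce such $\psi, \iota$ for which the vertices $\psi(X) \cup \iota(X)$ all share a common color. The natural plan is to apply the definition of $\Rw$ to each slice $\{Z \cup Y : Z \in \QQ(\bX)\} \cong Q_{N_0}$ for $Y \subseteq \bY$ to produce a monochromatic weak copy $\psi_Y$; pass by pigeonhole to a family of $2^{Cn-1}$ slices of the same color, say red; and finally iterate a Blob Lemma-style argument on the layers of $\QQ(\bY)$, using the red slices as a dense resource at each step, to extract a single $\psi$ and a strong $\iota$ with $\psi(X) \cup \iota(X)$ red for every $X$.

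The principal obstacle is this last step: the weak embeddings $\psi_Y$ supplied by weak Ramsey can depend on $Y$ in a very unstructured way, and a naive pigeonhole fails because the number of weak embeddings of $Q_n$ into $Q_{N_0}$ is at most doubly exponential in $n$, vastly more than the $2^{Cn}$ slices available. Breaking through this would seem to require a strengthened, "list-type" weak Ramsey theorem asserting that one can restrict the image of a monochromatic weak copy to lie in a prescribed sparse family of candidates, from which compatible copies across slices could be selected. An alternative attack is to rework the sausage-chain argument of Theorem \ref{thm:weakQnQn} so that it produces a full induced $Q_n$ rather than a member of $\mathcal{P}(n,s,t)$, at the cost of only $O(n)$ further dimensions on top of $0.96n^2$; concretely, one would have to strengthen Lemma \ref{lem:truncatedCompletion}(iv) so that the outer layers of the weak copy can be promoted to an induced $Q_n$ within a dimension budget that is linear in $n$. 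Either route is where the substantive new work lies, and producing such a strengthening is essentially the content of the conjecture.
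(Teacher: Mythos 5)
The statement you are asked about is a conjecture: the paper offers no proof of it, and the hard inequality $R(Q_n,Q_n)\le \Rw(Q_n,Q_n)+O(n)$ is precisely what is left open. Your proposal correctly disposes of the easy half, $\Rw(Q_n,Q_n)\le R(Q_n,Q_n)$, which the paper itself records as a basic observation (every strong embedding is a weak embedding). Your "key structural observation" is also sound: if $\psi$ is a weak embedding into $\QQ(\bX)$ and $\iota$ a strong embedding into $\QQ(\bY)$ with $\bX\cap\bY=\varnothing$, then $X\mapsto \psi(X)\cup\iota(X)$ is a strong embedding, since intersecting with $\bY$ recovers $\iota(X)$ and $\iota$ reflects incomparabilities.

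However, the proposal does not prove the lower bound, and you say so yourself: the step where slice-by-slice weak copies $\psi_Y$ are to be aligned into a single $\psi$ compatible with a strong $\iota$ is exactly where the argument breaks, since the weak copies produced in different slices can be pairwise unrelated and there are far too many candidate weak embeddings for a pigeonhole over the $2^{Cn}$ slices. Note also that what you would actually need is stronger than a common $\psi$ across monochromatic slices: you need, for every $X\in Q_n$, the single vertex $\psi(X)$ to be red in the particular slice indexed by $\iota(X)$, which the definition of $\Rw$ applied slice-by-slice does not come close to providing. Your alternative suggestion, upgrading the sausage-chain argument of Theorem \ref{thm:weakQnQn} so that Lemma \ref{lem:truncatedCompletion}(iv) yields an induced $Q_n$ within an extra $O(n)$ dimensions, would in any case only improve a specific upper-bound construction and would not by itself relate $\Rw(Q_n,Q_n)$ to $R(Q_n,Q_n)$ from below. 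So the substantive content of the conjecture remains untouched; what you have is a reasonable program plus an honest identification of the obstruction, not a proof, which is consistent with the fact that the paper states this as an open conjecture rather than a theorem.
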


\bigskip


\noindent \textbf{Acknowledgments:}~\quad   Research was partially supported by DFG grant FKZ AX 93/2-1.


\section{Appendix - Calculation for the proof of Theorem \ref{thm:VnVn}}
We shall find $d$ such that $N^*= (d+o(1))\log n$, where 
$$N^*=\max\big\{N\ge\alpha(n): ~ N-\beta(N,n)< \alpha(n)\big\}\ \text{ and }\ \beta(N,n)=\min\big\{\beta\in\N: ~ \binom{N}{\beta}\ge n\big\}.$$
For arbitrary $N\in\N$ and $q$ with $0<q<1$, Stirling's formula provides that
\begin{equation}\label{eq:Hq}
\log\binom{N}{qN}=-\big(1+o(1)\big)N\big(q\log q + (1-q)\log(1-q)\big)=\big(1+o(1)\big)H(q)N,
\end{equation}
where $H(q)=-\big(q\log q + (1-q)\log(1-q)\big)$ is the \textit{binary entropy function}.
Let $c$ be the unique solution of $1-c=H(c)$, i.e.\ $c\approx0.2271$.
We shall show that $N^*=\big(\tfrac{1}{1-c}+o(1)\big)\log n$.
Let $q$ such that $qN^*=\beta(N^*,n)$, and let $q'$ such that $q'(N^*+1)=\beta(N^*+1,n)$.
The definition of $\beta$ implies that
\begin{equation}\label{eq:beta}
\binom{N^*}{qN^*-1}<n\le\binom{N^*}{qN^*}\qquad\text{and} \qquad \binom{N^*+1}{q'(N^*+1)-1}<n\le\binom{N^*+1}{q'(N^*+1)},
\end{equation}
By the definition of $N^*(n)$, we know that
\begin{equation}\label{eq:Nstar}
(1-q)N^*<\alpha(n)\le (1-q')(N^*+1).
\end{equation}

In the following, $o(1)$ always refers to the asymptotic behaviour for large $n$, so equivalently for large $N^*$ as $\alpha(n)\le N^*\le 2\alpha(n)$.
Recall that 
\begin{equation}\label{eq:alpha}
\alpha(n)=\big(1+o(1)\big)\log n.
\end{equation}
\\

We shall compute $q$. We label each step of our computation by an inequality from (\ref{eq:Hq}) to (\ref{eq:alpha}) which is being used. 
For example,  `$\stackrel{(\ref{eq:Hq})}{=}$' means that the equality holds because of (\ref{eq:Hq}).
To highlight the idea of the upcoming computation, we give a one-line proof, where some steps are not yet justified:
$$(1-q)N^*\stackrel{(\ref{eq:Nstar})}{\approx}\big(1+o(1)\big)\alpha(n)\stackrel{(\ref{eq:alpha})}{=}\big(1+o(1)\big)\log n
\stackrel{(\ref{eq:beta})}{\approx} \big(1+o(1)\big)\log \binom{N^*}{qN^*} \stackrel{(\ref{eq:Hq})}{=} \big(1+o(1)\big)H(q)N^*,$$
which would imply $q=\big(1+o(1)\big)c$ where $c$ is the unique solution of $1-c=H(c)$. However, some steps in the above computation require significant additional argumentation, which we give in the following.
\\

Observe that
$$(1-q)N^*\stackrel{(\ref{eq:Nstar})}{<}\alpha(n)\stackrel{(\ref{eq:alpha})}{=}\big(1+o(1)\big)\log n
\stackrel{(\ref{eq:beta})}{\le} \big(1+o(1)\big)\log \binom{N^*}{qN^*} \stackrel{(\ref{eq:Hq})}{=} \big(1+o(1)\big)H(q)N^*.$$
Thus, $1-q\le \big(1+o(1)\big)H(q)$, which implies that $q\le  \big(1+o(1)\big) c$.
Next we bound $q'$ from below. We see that
$$(1-q')(N^*+1)\stackrel{(\ref{eq:Nstar})}{\ge}\alpha(n)\stackrel{(\ref{eq:alpha})}{=}\big(1+o(1)\big)\log n
\stackrel{(\ref{eq:beta})}{>} \big(1+o(1)\big)\log \binom{N^*+1}{q'(N^*+1)-1}.$$
We shall show that $\log \binom{N^*+1}{q'(N^*+1)-1}\ge \big(1+o(1)\big)H(q')(N^*+1)$. For that, we first need a rough lower bound on $q'$.
\\

We know from (\ref{eq:Nstar}) that $N^*-qN^*\le \alpha(n)-1$. Note that $qN^*=\beta(N^*,n)\le \alpha(n)$, so $N^*+1\le qN^*+ \alpha(n)\le 2\alpha(n)$. Then
$$\binom{N^*+1}{\tfrac{1}{16}(N^*+1)}\le \binom{2\alpha(n)}{\tfrac{1}{8}\alpha(n)} \stackrel{(\ref{eq:Hq})}{=}
\left(\frac{2^2}{(\tfrac{1}{8})^{1/8}(\tfrac{15}{8})^{15/8}}\right)^{(1+o(1))\alpha(n)}\le 1.6^{(1+o(1))\log n}<n,$$
thus $q'\ge \tfrac{1}{16}$. This bound implies that
\begin{align*}
\binom{N^*+1}{q'(N^*+1)-1}&=\frac{q'(N^*+1)}{(1-q')(N^*+1)+1}\binom{N^*+1}{q'(N^*+1)}\\
&\ge \frac{q'}{2-q'}\binom{N^*+1}{q'(N^*+1)}\ge \frac{1}{31}\binom{N^*+1}{q'(N^*+1)}.
\end{align*}
Thus, 
$$\log \binom{N^*+1}{q'(N^*+1)-1}\ge -\log(31) + \log \binom{N^*+1}{q'(N^*+1)}\stackrel{(\ref{eq:Hq})}{=}\big(1+o(1)\big)H(q')(N^*+1).$$
Therefore, $1-q'\ge \big(1+o(1)\big)H(q')$, which implies that $q'\ge\big(1+o(1)\big) c$. 
\\

We obtain that
$$\alpha(n)\stackrel{(\ref{eq:Nstar})}{\le} (1-q')(N^*+1)\le (1+o(1))(1- c)(N^*+1)\le  \big(1+o(1)\big)(1-q)N^*  \stackrel{(\ref{eq:Nstar})}{\le} \big(1+o(1)\big)\alpha(n),$$
thus $N^*=\tfrac{(1+o(1))}{1-c}\alpha(n)=\tfrac{(1+o(1))}{1-c}\log n,$ as desired.


\begin{thebibliography}{99}
\bibitem{AW} M. Axenovich,  and S. Walzer. \textit {Boolean lattices: Ramsey properties and embeddings}. Order \textbf{34}(2), 287--298, 2017.

\bibitem{QnV} M. Axenovich, and C. Winter. \textit{Poset Ramsey numbers: large Boolean lattice versus a fixed poset}. Combinatorics, Probability and Computing, 1--16, 2023.


\bibitem{BP} T. Bohman, and F. Peng. \textit{A Construction for Boolean Cube Ramsey Numbers}. Order, 2022.



\bibitem{CCCLL} H.-B. Chen, W.-H. Chen, Y.-J. Cheng, W.-T. Li, and C.-A. Liu. \textit{Ramsey Properties for V-shaped Posets in the Boolean Lattices}. Preprint, available at arXiv:2108.08033v1, 2021.

\bibitem{CS} C. Cox, and D. Stolee. \textit{Ramsey Numbers for Partially-Ordered Sets}. Order \textbf{35}(3), 557--579, 2018.


\bibitem{FMTZ} V. Falgas-Ravry, K. Markstr\"om, A. Treglown, and Y. Zhao. \textit{Existence thresholds and Ramsey properties of random posets}. Random Structures and Algorithms \textbf{57}(4), 1097--1133, 2020.


\bibitem{GMT} D. Gr\'osz, A. Methuku, and C. Tompkins. \textit{Ramsey numbers of Boolean lattices}. Bulletin of the London Mathematical Society \textbf{55} (2), 914--932, 2023.


\bibitem{HNRT} M. Habib, L. Nourine, O. Raynaud, and E. Thierry. \textit{Computational aspects of the 2-dimension of partially ordered sets}. Theoretical Computer Science \textbf{312}(2-3), 401--431, 2004.


\bibitem{KT} H. Kierstead, and W. Trotter. \textit{Ramsey theoretic problem for finite ordered sets}. Discrete Mathematics \textbf{63}(2-3),  217--223, 1987.



\bibitem{LT} L. Lu, and C. Thompson. \textit{Poset Ramsey Numbers for Boolean Lattices}. Order \textbf{39}(2), 171--185, 2022.


\bibitem{NR} J. Ne\v{s}et\v{r}il, and V. R\"odl. \textit{Combinatorial partitions of finite posets and lattices - Ramsey lattices}, Algebra Universalis \textbf{19}(1), 106--119, 1984.


\bibitem{Sperner} E. Sperner. \textit{Ein Satz \"uber Untermengen einer endlichen Menge}, (in German). Mathematische Zeitschrift, \textbf{27}(1), 544--548, 1928.
\bibitem{Walzer} S. Walzer. Ramsey Variant of the 2-Dimension of Posets. In: Master Thesis, Karlsruhe Institute of Technology (2015).

\bibitem{QnEH} C. Winter. \textit{Erd\H{o}s-Hajnal problems for posets.} Preprint, available at arXiv:2310.02621, 2023.

\bibitem{QnK} C. Winter. \textit{Poset Ramsey number R(P,Qn). I. Complete multipartite posets}. Order, 2023. 


\end{thebibliography}
\end{document}